\title[]{Long time dynamics of the Nernst-Planck-Darcy System on $\R^3$}
\author[E. Abdo]{Elie Abdo}
\address[E. Abdo]
{
    Department of Mathematics \\
    American University of Beirut \\
    Beirut 1107-2020\\
    Lebanon.
} 
\email{ea94@aub.edu.lb}
\author[J. Germany]{Joe Germany}
\address[J. Germany]
{
    Department of Mathematics, Department of Physics \\
    American University of Beirut \\
    Beirut 1107-2020\\
    Lebanon.
}
\email{jmg15@mail.aub.edu}
\author[M.K. Hamdan]{Mohammad Khalil Hamdan}
\address[M.K. Hamdan]
{
    Department of Mathematics, Department of Philosophy \\
    American University of Beirut \\
    Beirut 1107-2020\\
    Lebanon.
}
\email{mkh28@mail.aub.edu}
\author[K. Kontar]{Kifah Kontar}
\address[K. Kontar]
{
    Department of Physics \\
    Department of Mathematics \\
    American University of Beirut \\
    Beirut 1107-2020\\
    Lebanon.
}
\email{ksa34@mail.aub.edu}
\newcommand{\pa}{\partial}
\newcommand{\la}{\label}
\newcommand{\fr}{\frac}
\newcommand{\na}{\nabla}
\newcommand{\be}{\begin{equation}}
\newcommand{\ee}{\end{equation}}
\newcommand{\ba}{\begin{array}{l}}
\newcommand{\ea}{\end{array}}
\newtheorem{prop}{Proposition}
\newcommand{\beg}{\begin}
\newcommand{\G}{\Gamma}
\newcommand{\D}{\Delta}
\newcommand{\Div}[1]{\na \cdot #1}
\newcommand{\Rint}[0]{\int_{\R^3}}
\newcommand{\RInt}[1]{\int_{\R^3} #1 \, dx}
\newcommand{\FInt}[1]{\int_{\R^3} #1 \, d\xi}
\newcommand{\Tint}[0]{\int_0^t}
\newcommand{\TInt}[1]{\int_0^t #1 \, ds}
\newcommand{\dt}[0]{\fr{d}{dt}}
\newcommand{\Lnorm}[2]{\left\lVert #1 \right\rVert_{L^{#2}}}
\newcommand{\Hnorm}[2]{\left\lVert #1 \right\rVert_{H^{#2}}}
\newcommand{\Wnorm}[3]{\left\lVert #1 \right\rVert_{W^{#2, \, #3}}}
\newcommand{\wh}[1]{\widehat{#1}}
\renewcommand{\l}{\Lambda}
\newtheorem{Thm}{Theorem}
\numberwithin{Def}{section}
\newcommand{\R}{\mathbb R}
\def\PP{\mathbb P}
\date{\today}
\begin{document}
\begin{abstract} 
    We study ionic electrodiffusion modeled by the Nernst--Planck equations describing the evolution of \(N\) ionic species in a three-dimensional incompressible fluid flowing through a porous medium. We address the long-time dynamics of the resulting system in the three-dimensional whole space \(\mathbb{R}^3\). We prove that the \(k\)-th spatial derivatives of each ionic concentration decays to zero in \(L^2\) with a sharp rate of order \(t^{-\frac{3}{4}-\frac{k}{2}}\). Moreover, we investigate the behavior of the relative entropy associated with the model and show that it blows up in time with a sharp growth rate of order \(\log t\).
\end{abstract} 

\keywords{electrodiffusion, Nernst-Planck equations, Darcy's law, porous media, long-time behavior, entropy}

\noindent\thanks{\em{MSC Classification: 35K57, 35B40, 35Q92}}

\maketitle
\section{Introduction}\la{intro}

We study an electrodiffusion model that describes the coupled dynamics between ionic transport, fluid motion, and self-consistent electrostatic interactions. Positively and negatively charged ions migrate under the combined effects of concentration gradients, advection by a moving fluid, and the electric field generated by the ionic charge distribution. Such coupled mechanisms lie at the heart of a broad range of phenomena in physics, chemistry, and biology, including electrokinetic flows, ion channels, and charged porous media. In particular, they have been the subject of extensive investigation in neuroscience \cite{cole1965electrodiffusion, jasielec2021electrodiffusion, koch2004biophysics, lopreore2008computational, mori2009numerical, nicholson2000diffusion, pods2013electrodiffusion, qian1989electro}, semiconductor theory \cite{biler1994debye, gajewski1986basic, mock1983analysis}, water purification, desalination, and ion separation \cite{gao2014high, lee2016membrane, lee2018diffusiophoretic, yang2019review, zhu2020ion} and selective ion membranes \cite{davidson2016dynamical, goldman1989electrodiffusion}.

Such electrodiffusion models are described by the Nernst--Planck equations governing the evolution of a system of \(N\) ionic species with concentrations \(c_1,\ldots,c_N\). Each species evolves according to 
\begin{equation}
(\partial_t + u \cdot \nabla)c_i
= D_i \,\mathrm{div}\big(\nabla c_i + z_i c_i \nabla \phi\big),
\qquad i=1,\ldots,N.
\end{equation}
Here \(c_i(x,t)\) denotes the concentration of the \(i\)-th ionic species at position \(x\) and time \(t\). The positive constants \(D_i\) represent the diffusivities, while \(z_i \in \mathbb{R}\) are the ionic valences. The transport is driven not only by diffusion and advection through the velocity field \(u\), but also by electrostatic drift induced by the electric potential \(\phi\).

The potential \(\phi\) is determined self-consistently through the Poisson equation
\begin{equation}
-\epsilon \Delta \phi = \rho :=  \sum_{i=1}^{N} z_i c_i,
\end{equation}
where \(\rho\)  is the total ionic charge density. The potential \(\phi\) is the nondimensionalized form
\begin{equation}
\phi = \frac{e}{K_{\beta} T_k}\,\Psi
\end{equation}
of the physical electric potential \(\Psi\). Here \(e\) denotes the elementary charge, \(K_{\beta}\) is Boltzmann’s constant, and \(T_k\) is the absolute temperature. The dielectric parameter
\begin{equation}
\epsilon = \frac{\mathcal{E} K_{\beta} T_k}{e^2}
= c_0 \left(\sum_{i=1}^{N} z_i^2\right)\lambda_D^2
\end{equation}
depends on the dielectric permittivity \(\mathcal{E} > 0\) of the solvent, a reference bulk concentration \(c_0 > 0\), and the Debye screening length
\begin{equation}
\lambda_D
= \sqrt{\frac{\mathcal{E} K_{\beta} T_k}{c_0 e^2 \sum_{i=1}^{N} z_i^2}}.
\end{equation}
A comprehensive discussion of the underlying physical principles of ionic electrodiffusion can be found in \cite{rubinstein1990electro}.

The ionic dynamics are coupled to the motion of the surrounding fluid. If the latter is viscous, its velocity field \(u\) satisfies the incompressible Navier--Stokes equations
\begin{equation}
\partial_t u + u \cdot \nabla u - \nu \Delta u + \nabla p
= - K_{\beta} T_k \rho \nabla \phi,
\end{equation}
together with the incompressibility constraint
\begin{equation}
\nabla \cdot u = 0.
\end{equation}
Here \(p\) denotes the pressure and \(\nu > 0\) is the kinematic viscosity. The Lorentz-type forcing term
\(- K_{\beta} T_k \rho \nabla \phi\) encodes the feedback of the electrostatic field on the fluid motion, giving rise to a fully coupled Nernst--Planck--Navier--Stokes (NPNS) system.

The Nernst–Planck–Navier–Stokes (NPNS) system has been extensively studied in the mathematical literature as a prototypical model for the nonlinear coupling between ionic electrodiffusion and incompressible fluid flow. Early works established the existence of global weak solutions and local strong solutions, emphasizing the interplay between transport, diffusion, and electrostatic forcing \cite{fischer2017global, jerome2002analytical, schmuck2009analysis}. Subsequent studies investigated regularity, uniqueness, and long-time behavior of solutions, often relying on entropy methods and energy dissipation structures inherent to the system \cite{constantin2019nernst, ryham2009existence}. The analysis was further refined to address multi-species settings, various boundary conditions, and the role of charge neutrality constraints \cite{constantin2021nernst, lee2022global}. More recent contributions focus on asymptotic stability near equilibrium, decay rates, and enhanced regularity properties, including analyticity and smoothing effects \cite{abdo2021long, abdo2022space, abdo2024long, constantin2022nernst}.

In this work, we focus on electrodiffusion in porous media, where the fluid motion is no longer governed by the Navier--Stokes equations but is instead described by Darcy's law. In this regime, the velocity field \(u\) is given by
\begin{equation}
u = - \kappa \big( \nabla p + K_{\beta} T_k \rho \nabla \phi \big),
\end{equation}
where \(\kappa > 0\) denotes the permeability of the porous medium and \(p\) is the pressure. The resulting system is referred to as the Nernst--Planck--Darcy (NPD) system. Compared to the NPNS system, the NPD model presents additional analytical challenges due to the more singular dependence of the velocity \(u\) on the electrostatic forces. This singular coupling amplifies both the nonlinearity and the nonlocal character of the Nernst--Planck equations, significantly complicating the mathematical analysis.

Early mathematical studies of Nernst–Planck–Darcy type systems focused on drift–diffusion models coupled with porous-media flow, establishing global weak solutions and basic energy dissipation mechanisms \cite{herz2016global1, herz2016global2, herz2012existence}. In the periodic setting, the case of two ionic species on the torus was addressed in \cite{ignatova2022global}, where global well-posedness and smoothness were shown to hold without smallness assumptions, relying on the special algebraic structure of the electrostatic coupling. More recently, this theory has been extended in \cite{abdo2025long} to systems with an arbitrary number of ionic species, all with equal diffusivities, on a three-dimensional torus under periodic boundary conditions. In this multi-species periodic framework, global existence, uniqueness, and asymptotic convergence to equilibrium were established, together with decay estimates. 

In this paper, we address the long-time dynamics of the NPD system for $N$ ionic species with equal diffusivities $D_1 = \dots = D_N = D$ and equal absolute valences $|z_1| = \dots = |z_N| = z$ on $\R^3$ with decay at infinity. As the physical constants $\epsilon, \kappa,  K_{\beta},$ and $T_k$ do not have any contribution to the analysis of the problem, we assume that $\epsilon = \kappa = K_{\beta} T_k = 1$ for simplicity, and we obtain the following system of partial differential equations 
\begin{align}
    &\partial_t c_i + u \cdot \na c_i - D_i \Delta c_i - z_i D_i \na \cdot (c_i \na \phi) = 0 \label{nernst_planck} \\
    &u + \nabla p = -\rho \nabla \phi \label{darcy_eq} \\
    &\na \cdot u = 0\label{div_free_eq}  \\
    &- \D \phi = \rho \label{poisson}.
\end{align}

We start by proving that the $L^2$ norm of the $k$-th order derivative of each ionic concentration decays in time with a sharp rate of decay of order
$t^{-\frac{3}{4}-\frac{k}{2}}$. The main difficulties arise from the nonlinear
and nonlocal structure of the model, generated by the advection and
electromigration terms. We show that these nonlinearities do not influence
the asymptotic decay rate of the solution; in particular, each concentration
decays at the same rate as the solution of the linear homogeneous
three-dimensional heat equation. The proof relies on the Fourier splitting
method, a bootstrapping argument, and nonlinear and nonlocal
analysis.

In addition, we investigate the entropy structure associated with the model.
As diffusion spreads the ions over increasingly large spatial regions, their
concentrations become very small, which leads to a growth of entropy. We
prove that the relative entropy of each ionic species,
\[
\mathcal{E}_i(t) = \int_{\R^3} c_i \log c_i \, dx,
\]
blows up as $t \to \infty$ with a sharp rate of growth of order $\log t$. The
argument is novel and highly nontrivial. To obtain upper bounds, we study the
evolution of the quantities ${N}_i = e^{-c\mathcal{E}_i}$ (for an appropriately chosen $c$) and prove Sobolev logarithmic inequalities on $\R^3$ to deduce that
${N}_i$ is bounded linearly in time from below. The latter yields logarithmic-in-time lower bounds for $-\mathcal{E}_i$.  On the other hand, we show that the total relative entropy
$\mathcal{E}(t)=\sum_i \mathcal{E}_i(t)$ satisfies
\[
-\mathcal{E}(t)
\le C \log\!\left(\int_{\R^3} (1+|x|)^6 c_i^2 \, dx \right),
\]
which motivates the study of polynomial moments of the solution. In this
direction, we prove that the asymptotic growth in time of $\||x|^3 c_i\|_{L^2}^2$ is of order $t^{\frac{5}{2}}$ and obtain logarithmic-in-time upper bounds for $-\mathcal{E}$.

Finally, we prove that
\[
\lim_{t \to \infty}
\left| \int_{\{|x|\le A\}} c_i \log c_i \, dx \right| = 0,
\] for any $A > 0$ and any $i \in \left\{1, \dots, N \right\}$. This shows that the contribution of the relative entropy from any fixed bounded
region vanishes as time tends to infinity. Although the total entropy grows
logarithmically in time, this growth does not originate from the core of the
domain. Instead, the ionic concentrations become locally negligible in an
entropic sense, ruling out any singular behavior on compact sets. Consequently, the entropy increase is entirely driven by the spreading of mass to large spatial scales. 

This paper is organized as follows. In section \ref{s3}, we prove that the $L^2$ norm of the $k$-th derivative of each ionic concentrations decay to $0$ with a rate of order $t^{-\frac{3}{4}-\frac{k}{2}}$. In section \ref{s4}, we prove the sharpness of that decay rate via comparison with the linear homogeneous heat equation. Finally, in section \ref{s5}, we prove that the relative entropy associated with the NPD system on $\R^3$ diverges in time with a sharp rate of order $\log t$ but converges to 0 when the spatial domain is compact.

{\bf{Notation.}} We write \(A \lesssim B\) to indicate that \(A \le C\,B\) for some positive constant \(C\) depending only on the initial data, the parameters of the problem, and universal constants. The value of \(C\) may change from line to line.

For \(f \in L^{1}(\mathbb{R}^{n})\), we denote by \(\widehat{f}\) the Fourier transform of \(f\), defined by
\[
\widehat{f}(\zeta) := \int_{\mathbb{R}^{n}} f(x)\, e^{-2\pi i\, x \cdot \zeta}\, \mathrm{d}x .
\]

Finally, we denote by \(W^{k,p}(\mathbb{R}^{3})\) the classical Sobolev spaces, with the abbreviations
\(H^{k}(\mathbb{R}^{3}) := W^{k,2}(\mathbb{R}^{3})\) and
\(L^{p}(\mathbb{R}^{3}) := W^{0,p}(\mathbb{R}^{3})\).
When no ambiguity arises, the spatial domain \(\mathbb{R}^{3}\) will be omitted from the notation.

\section{Decay in Sobolev Spaces} \la{s3}

In this section, we address the long-time behavior of solutions to the NPD system in all Sobolev spaces. As the potential $\phi$ obeys a Poisson equation, the following elliptic estimates will be used frequently in the manuscript:

\begin{prop}[Elliptic Estimates]
    Let $g$ be in $H^1(\R^3) \cap L^1(\R^3)$ such that $\int_{\R^3} g = 0$. Let $\Psi$ be a solution to the Poisson equation $- \D \Psi = g$. Then the following estimates hold:
    \begin{align}
        \Lnorm{\na \Psi}{2} &\lesssim \Lnorm{g}{1}^{\frac{2}{3}} \Lnorm{g}{2}^{\frac{1}{3}},\label{poisson1} \\
        \Lnorm{\na \Psi}{6} &\lesssim \Lnorm{g}{2}, \label{poisson2} \\
        \Lnorm{\na \Psi}{\infty} &\lesssim \Lnorm{\na \Psi}{2}^{\fr{1}{4}} \Lnorm{g}{2}^{\frac{3}{4}} + \Lnorm{g}{4}, \label{poisson3} \\
        \Lnorm{\na \Psi}{\infty} &\lesssim \Lnorm{g}{2} + \Lnorm{\na g}{2}. \label{poisson4}
    \end{align}
\end{prop}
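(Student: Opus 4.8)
The plan is to establish the four bounds in turn, reducing each to the Fourier-side identity $\wh{\na\Psi}(\zeta)=\fr{i\zeta}{2\pi|\zeta|^{2}}\wh g(\zeta)$, so that $|\wh{\na\Psi}(\zeta)|=(2\pi|\zeta|)^{-1}|\wh g(\zeta)|$; the hypotheses $g\in L^{1}\cap H^{1}$ and $\int_{\R^{3}}g=0$ ensure that $\wh g$ is continuous with $\wh g(0)=0$ and that the Poisson problem is solvable. For \eqref{poisson1} I would use Plancherel to express $\Lnorm{\na\Psi}{2}^{2}$ as a constant multiple of $\int_{\R^{3}}|\zeta|^{-2}|\wh g(\zeta)|^{2}\,d\zeta$ and split this integral at a radius $R>0$: on $\{|\zeta|\le R\}$ use $|\wh g|\le\Lnorm{g}{1}$ together with the three-dimensional integrability $\int_{|\zeta|\le R}|\zeta|^{-2}\,d\zeta=4\pi R$, while on $\{|\zeta|>R\}$ bound $|\zeta|^{-2}\le R^{-2}$ and apply Plancherel again. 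This gives $\Lnorm{\na\Psi}{2}^{2}\lesssim R\Lnorm{g}{1}^{2}+R^{-2}\Lnorm{g}{2}^{2}$, and choosing $R=(\Lnorm{g}{2}/\Lnorm{g}{1})^{2/3}$ yields \eqref{poisson1}. Estimate \eqref{poisson2} is then immediate from the Sobolev embedding $\dot H^{1}(\R^{3})\hookrightarrow L^{6}(\R^{3})$ applied to each component of $\na\Psi$, combined with the Plancherel identity $\Lnorm{\na^{2}\Psi}{2}=\Lnorm{\D\Psi}{2}=\Lnorm{g}{2}$.

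For \eqref{poisson4} I would start from the Hausdorff--Young bound $\Lnorm{\na\Psi}{\infty}\le\Lnorm{\wh{\na\Psi}}{1}$, which up to a constant equals $\int_{\R^{3}}|\zeta|^{-1}|\wh g(\zeta)|\,d\zeta$, and split at $|\zeta|=1$. On $\{|\zeta|\le1\}$, Cauchy--Schwarz and $\int_{|\zeta|\le1}|\zeta|^{-2}\,d\zeta<\infty$ (three-dimensionality again) produce the bound $\Lnorm{g}{2}$; on $\{|\zeta|>1\}$, writing $|\zeta|^{-1}|\wh g|=|\zeta|^{-2}\cdot(|\zeta|\,|\wh g|)$ and applying Cauchy--Schwarz with $\int_{|\zeta|>1}|\zeta|^{-4}\,d\zeta<\infty$ produces $\Lnorm{\na g}{2}$. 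Summing gives \eqref{poisson4}.

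Estimate \eqref{poisson3} is the delicate one, and I would treat it by a Littlewood--Paley-type decomposition $g=g_{\le1}+g_{>1}$ at unit frequency, with the induced splitting $\na\Psi=\na\Psi_{\le1}+\na\Psi_{>1}$. For the low-frequency piece, $\Lnorm{\na\Psi_{\le1}}{\infty}\le\Lnorm{\wh{\na\Psi_{\le1}}}{1}\lesssim\int_{|\zeta|\lesssim1}|\zeta|^{-1}|\wh g(\zeta)|\,d\zeta$, and I would bound this by a three-factor H\"older inequality with exponents $(8,\,8/3,\,2)$, factoring the integrand as $(|\zeta|^{-2}|\wh g|^{2})^{1/8}\cdot(|\wh g|^{2})^{3/8}\cdot|\zeta|^{-3/4}$: the first factor yields $\Lnorm{\na\Psi}{2}^{1/4}$ by Plancherel, the second yields $\Lnorm{g}{2}^{3/4}$, and the third is a finite constant because $|\zeta|^{-3/2}$ is integrable near the origin in $\R^{3}$ (this is exactly where the low-frequency cutoff is needed, since $|\zeta|^{-3/2}$ fails to be integrable at infinity). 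For the high-frequency piece I would use the Sobolev embedding $W^{1,4}(\R^{3})\hookrightarrow L^{\infty}(\R^{3})$ (equivalently the Gagliardo--Nirenberg inequality $\Lnorm{h}{\infty}\lesssim\Lnorm{h}{4}^{1/4}\Lnorm{\na h}{4}^{3/4}$) with $h=\na\Psi_{>1}$, then the high-frequency comparison $\Lnorm{\na\Psi_{>1}}{4}\lesssim\Lnorm{\na^{2}\Psi_{>1}}{4}$ (legitimate because the Fourier support avoids a fixed neighborhood of the origin), and finally the Calder\'on--Zygmund estimate $\Lnorm{\na^{2}\Psi_{>1}}{4}\lesssim\Lnorm{g_{>1}}{4}\lesssim\Lnorm{g}{4}$; these combine to give $\Lnorm{\na\Psi_{>1}}{\infty}\lesssim\Lnorm{g}{4}$, and adding the two contributions yields \eqref{poisson3}.

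The main obstacle is precisely the high-frequency part of \eqref{poisson3}: the soft bound $\Lnorm{\na\Psi}{\infty}\le\Lnorm{\wh{\na\Psi}}{1}$ cannot by itself produce an $L^{4}$ norm of $g$, since recovering $\Lnorm{\wh g}{4/3}$ from $\Lnorm{g}{4}$ would require Hausdorff--Young with a dual exponent below $2$, which fails. One is therefore forced off the Fourier side on that frequency range, and the point requiring care is keeping the frequency localization consistent, so that the $L^{4}\to L^{4}$ boundedness of the Littlewood--Paley projectors and the high-frequency Poincar\'e-type inequality $\Lnorm{h}{4}\lesssim\Lnorm{\na h}{4}$ both apply on the same regime.
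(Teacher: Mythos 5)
Your proposal is correct, and for three of the four estimates it takes a genuinely different route from the paper. The paper proves \eqref{poisson1} by writing $\na \Psi = \na \l^{-2} g$ and chaining the $L^2$ boundedness of the Riesz transform, the Hardy--Littlewood--Sobolev inequality $\Lnorm{\l^{-1}g}{2}\lesssim\Lnorm{g}{6/5}$, and log-convexity of $L^p$ norms, whereas you obtain the same interpolation exponents by a direct Plancherel splitting of $\int |\zeta|^{-2}|\wh g|^2$ at an optimized radius $R$ — more elementary, avoiding HLS entirely. For \eqref{poisson4} the paper uses the chain $\Lnorm{\na\Psi}{\infty}\lesssim\Wnorm{\na\Psi}{1}{6}\lesssim\Lnorm{g}{2}+\Lnorm{g}{6}\lesssim\Lnorm{g}{2}+\Lnorm{\na g}{2}$ (Sobolev embeddings plus the $L^6$ Riesz transform bound), while you get it from Hausdorff--Young and Cauchy--Schwarz on the two frequency ranges; again purely Fourier-side and self-contained. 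For \eqref{poisson3} the paper simply writes $\Lnorm{\na\Psi}{\infty}\lesssim\Wnorm{\na\Psi}{1}{4}$, applies the Ladyzhenskaya interpolation $\Lnorm{\na\Psi}{4}\lesssim\Lnorm{\na\Psi}{2}^{1/4}\Lnorm{\D\Psi}{2}^{3/4}$ to the lower-order term, and invokes the $L^4$ Riesz transform bound for the Hessian — no frequency decomposition at all. Your Littlewood--Paley splitting reproduces exactly these two terms (your three-factor H\"older on low frequencies yields the same $\Lnorm{\na\Psi}{2}^{1/4}\Lnorm{g}{2}^{3/4}$, and your high-frequency Bernstein-plus-Calder\'on--Zygmund step yields $\Lnorm{g}{4}$), and your diagnosis that one cannot stay on the Fourier side for the $\Lnorm{g}{4}$ contribution is accurate — but note that you do not actually escape singular-integral theory there, so on that point the two proofs rest on the same $L^4$ Calder\'on--Zygmund input and the paper's version is simply shorter. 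Estimate \eqref{poisson2} is handled essentially identically in both.
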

\begin{proof}
    For \eqref{poisson1}, using the boundedness of the Riesz transform on $L^2$, the Hardy-Littlewood-Sobolev inequality, 
    followed by the log-convexity of $L^p$ norms, we have
    \begin{equation}
        \Lnorm{\na \Psi}{2} = \Lnorm{\na \l^{-2} g}{2} \lesssim \Lnorm{\l^{-1}g}{2} \lesssim \Lnorm{g}{\frac{6}{5}} \lesssim \Lnorm{g}{1}^{\frac{2}{3}} \Lnorm{g}{2}^{\frac{1}{3}}.
    \end{equation}
    For \eqref{poisson2}, using the Sobolev embedding of $H^1$ in $L^6$ and the boundedness of the Riesz transform on $L^2$ yield 
    \begin{equation}
        \Lnorm{\na \Psi}{6} \lesssim \Lnorm{\na \na \Psi}{2} = \Lnorm{\na \na \l^{-2} g}{2} \lesssim \Lnorm{g}{2}.
    \end{equation}
    For \eqref{poisson3}, by the continuous embedding of $W^{1, \, 4}$ in $L^\infty$, the three-dimensional Ladyzhenskaya interpolation inequality, and the boundedness of the Riesz transform on $L^4$, we estimate
    \begin{equation}
        \Lnorm{\na \Psi}{\infty} \lesssim \Wnorm{\na \Psi}{1}{4} \lesssim \Lnorm{\na \Psi}{4} + \Lnorm{\na \na \Psi}{4} \lesssim \Lnorm{\na \Psi}{2}^{\fr{1}{4}} \Lnorm{\D \Psi}{2}^{\fr{3}{4}} + \Lnorm{g}{4}.
    \end{equation}
    For \eqref{poisson4}, by the continuous embedding of $W^{1, \, 6}$ in $L^\infty$, the estimate \eqref{poisson2}, the Sobolev embedding theorem, and the boundedness of the Riesz transform on $L^6$, we have
    \begin{equation}
        \Lnorm{\na \Psi}{\infty} \lesssim \Wnorm{\na \Psi}{1}{6} \lesssim \Lnorm{\na \Psi}{6} + \Lnorm{\na \na \Psi}{6} \lesssim \Lnorm{g}{2} + \Lnorm{g}{6} \lesssim \Lnorm{g}{2} + \Lnorm{\na g}{2}.
    \end{equation}
\end{proof}

The NPD system has a unique local smooth solution for $H^1$ initial data, a fact that follows from a standard mollification argument and passage to the limit using the Aubin-Lions Lemma and the lower semi-continuity of the norms. Moreover, the nonnegativity of the ionic concentrations is preserved on any time interval where the existence and uniqueness of the solution are guaranteed (see \cite{constantin2019nernst}). In the following proposition, we derive a priori uniform global bounds that allow global-in-time extension and describe the behavior of the ionic concentration when time blows up. 

\begin{prop} \label{prop_3} 
Suppose $c_i(0) \ge 0$, $c_i(0) \in H^1(\R^3) \cap L^1(\R^3)$ for all $i \in \left\{1, \dots, N\right\},$ and $\int_{\R^3} \rho_0 = 0$.
Then, there exist positive constants $M_1$ and $M_2$ depending on the parameters of the problem and on the $L^1$ and $H^1$ norms of the initial concentrations $c_i(0)$, such that
\be\begin{aligned}
    \Lnorm{c_i}{2}^2 \leq \fr{M_1}{(t+1)^\fr
    {3}{2}}, \quad \Lnorm{\na c_i}{2}^2 \leq \fr{M_2}{(t+1)^\fr{5}{2}},
\end{aligned}\ee
for all $t \geq 0$ and $i \in \left\{1, \dots, N \right\}$.
\end{prop}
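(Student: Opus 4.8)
The plan is to exploit the algebraic structure created by the assumptions $D_i\equiv D$, $|z_i|\equiv z$ together with the Fourier splitting method. Throughout, set $s=\sum_{i=1}^N c_i$ and $\rho=\sum_{i=1}^N z_i c_i$; note $s\ge 0$ and $0\le c_i\le s$ pointwise since each $c_i\ge 0$. Integrating \eqref{nernst_planck} over $\R^3$ and using \eqref{div_free_eq} shows that every $\Lnorm{c_i}{1}$ is conserved, hence so are $\Lnorm{s}{1}$ and $\Lnorm{\rho}{1}$, and $\int_{\R^3}\rho(t)=0$ for all $t$. Summing \eqref{nernst_planck} over $i$, and then summing it against $z_i$ and using $z_i^2=z^2$, one gets a system closed in $(s,\rho)$: namely $\pa_t s+u\cdot\na s-D\D s=D\,\na\cdot(\rho\na\phi)$ and $\pa_t\rho+u\cdot\na\rho-D\D\rho=Dz^2\,\na\cdot(s\na\phi)$, coupled through \eqref{darcy_eq}--\eqref{poisson}. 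Testing the $\rho$-equation with $\rho$, the $s$-equation with $s$, integrating the electromigration terms by parts and using $\D\phi=-\rho$, and then forming $z^2\times$(the $s$-identity)$+$(the $\rho$-identity), the two cross terms combine into $z^2D\int_{\R^3}\na(s\rho)\cdot\na\phi=z^2D\int_{\R^3}s\rho^2$, and after cancellation one obtains
\[
\tfrac12\dt\big(z^2\Lnorm{s}{2}^2+\Lnorm{\rho}{2}^2\big)+D\big(z^2\Lnorm{\na s}{2}^2+\Lnorm{\na\rho}{2}^2\big)=-z^2D\!\int_{\R^3}\!s\rho^2\le 0,
\]
using $s\ge 0$. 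This yields a uniform bound on $z^2\Lnorm{s}{2}^2+\Lnorm{\rho}{2}^2$, the integrability $\int_0^\infty(z^2\Lnorm{\na s}{2}^2+\Lnorm{\na\rho}{2}^2)\,dt<\infty$, and, via $c_i\le s$, a uniform $L^2$ bound on each $c_i$.

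Next I would Fourier split the displayed identity. Writing $H(t)=z^2\Lnorm{s}{2}^2+\Lnorm{\rho}{2}^2$ and using Plancherel, for any radius $g>0$,
\[
z^2\Lnorm{\na s}{2}^2+\Lnorm{\na\rho}{2}^2\ \gtrsim\ g^2H(t)-g^2\!\int_{|\zeta|\le g}\big(z^2|\wh s|^2+|\wh\rho|^2\big)\,d\zeta\ \gtrsim\ g^2H(t)-C g^5,
\]
where the last step uses $|\wh s(\zeta)|\le\Lnorm{s}{1}$, $|\wh\rho(\zeta)|\le\Lnorm{\rho}{1}$ and the conservation of these $L^1$ norms. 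Choosing $g(t)^2=\beta/(t+1)$ with $\beta$ large gives $\dt H+\tfrac{cD\beta}{t+1}H\lesssim (t+1)^{-5/2}$; multiplying by the integrating factor $(t+1)^{cD\beta}$ and taking $\beta$ so that $cD\beta\ge 3/2$ yields $H(t)\lesssim (t+1)^{-3/2}$. Since $\Lnorm{c_i}{2}^2\le\Lnorm{s}{2}^2\le z^{-2}H(t)$, this is the first asserted bound; no bootstrapping is needed here because the nonlinearity has a favorable sign and the monotonicity $c_i\le s$ transfers the decay directly.

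For the second bound I would run the same scheme one derivative higher, but on each $c_i$ individually, since $\na c_i$ is not controlled by $\na s$ and $\na\rho$. Testing \eqref{nernst_planck} with $-\D c_i$ gives $\tfrac12\dt\Lnorm{\na c_i}{2}^2+D\Lnorm{\D c_i}{2}^2$ equal to $-\int_{\R^3}\na u:(\na c_i\otimes\na c_i)$ plus $z_iD\int_{\R^3}(\na c_i\cdot\na\phi+c_i\D\phi)\,\D c_i$. Using the elliptic estimates \eqref{poisson1}--\eqref{poisson4}, Gagliardo--Nirenberg interpolation, the uniform $L^2$ bounds together with the decay $\Lnorm{c_i}{2}^2,\Lnorm{\rho}{2}^2\lesssim (t+1)^{-3/2}$ just obtained, and the $L^2_t$ integrability of $\na s,\na\rho$, one absorbs all top-order ($\Lnorm{\D c_i}{2}$) contributions into the dissipation, reaching an inequality of the form $\dt\Lnorm{\na c_i}{2}^2+D\Lnorm{\D c_i}{2}^2\lesssim (\text{decaying or }L^1_t)\,\Lnorm{\na c_i}{2}^2+(\text{fast decaying})$. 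Fourier splitting with the same cutoff $g(t)^2=\beta/(t+1)$ — now using $\int_{|\zeta|\le g}|\wh{\na c_i}|^2\,d\zeta\lesssim g^5\Lnorm{c_i}{1}^2$ for the low frequencies — together with a short bootstrap in the decay exponent gives $\Lnorm{\na c_i}{2}^2\lesssim (t+1)^{-5/2}$. The uniform $H^1$ bound produced along the way feeds back into the local theory to yield the global-in-time smooth solution.

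The main obstacle is precisely this $\dot H^1$ estimate for the individual $c_i$. Because $u=-\PP(\rho\na\phi)$ depends more singularly on the electrostatic field than in the Navier--Stokes case, the advection and electromigration terms in $\dt\Lnorm{\na c_i}{2}^2$ naturally carry one more derivative of $\rho=\sum_j c_j$ than is directly available — exactly at the level of the dissipation — so keeping the borderline terms absorbable while preserving the sharp rate $(t+1)^{-5/2}$ forces a careful interplay of the elliptic estimates, interpolation, the already-established decay, the $L^2_t$ integrability of $\na s,\na\rho$, and the bootstrap. Everything else (conservation laws, the combined energy identity, and the $L^2$ Fourier splitting) is comparatively routine once the algebraic reduction to $(s,\rho)$ is in place.
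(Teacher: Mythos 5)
Your overall strategy coincides with the paper's: reduce to the pair $(\sigma,\rho)=(\sum_i c_i,\sum_i z_ic_i)$, derive the combined energy identity $\tfrac12\frac{d}{dt}(\|\rho\|_{L^2}^2+z^2\|\sigma\|_{L^2}^2)+D\|\na\rho\|_{L^2}^2+Dz^2\|\na\sigma\|_{L^2}^2=-Dz^2\int\sigma\rho^2\le 0$, Fourier-split to get the $(t+1)^{-3/2}$ decay, then run an $\dot H^1$ energy estimate plus Fourier splitting for the gradient. The first half of your argument is correct and in fact slightly leaner than the paper's: the crude bound $|\wh\sigma(\xi)|\le\|\sigma_0\|_{L^1}$ already gives $\int_{|\xi|\le r}|\wh\sigma|^2\,d\xi\lesssim r^3$, which yields the same rate as the paper's refined Duhamel pointwise bound $|\wh\sigma|\lesssim\|\sigma_0\|_{L^1}+|\xi|\sqrt t$ at this order, so the detour through the time-integrated bounds on $\|u\sigma\|_{L^1}$ and $\|\rho\na\phi\|_{L^1}$ is not needed for the $L^2$ decay.

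The gap is in the gradient estimate, precisely where you flag ``the main obstacle,'' and your listed toolbox does not close it. First, the coefficient of $\|\na c_i\|_{L^2}^2$ coming from the advection term is $\|u\|_{L^6}^4$ (after $\int|u\cdot\na c_i\,\D c_i|\le\|u\|_{L^6}\|\na c_i\|_{L^2}^{1/2}\|\D c_i\|_{L^2}^{3/2}$ and Young), and $\|u\|_{L^6}\lesssim\|\rho\|_{L^6}\|\na\phi\|_{L^\infty}$. With only the uniform $L^2$ bounds, the $(t+1)^{-3/2}$ decay, and the $L^2_t$ integrability of $\na\sigma,\na\rho$ in hand, one gets $\|u\|_{L^6}^4\lesssim\|\na\rho\|_{L^2}^4(\cdots)$, which is neither decaying nor integrable in time; making this coefficient integrable is exactly why the paper inserts the intermediate $L^3$, $L^4$, and $L^6$ decay of $\sigma$ (its Step 5) before attacking $\na c_i$. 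Your proposal omits this layer entirely. Second, the assertion that the $L^2_t$ integrability of $\na\sigma,\na\rho$ helps control the individual $\na c_i$ is unfounded for $N>2$: the map $(c_1,\dots,c_N)\mapsto(\sigma,\rho)$ is not injective, so $\|\na c_i\|_{L^2}$ is not bounded by $\|\na\sigma\|_{L^2}+\|\na\rho\|_{L^2}$. The correct substitute is the separate energy inequality $\frac{d}{dt}\|c_i\|_{L^2}^2+D\|\na c_i\|_{L^2}^2\le C\|c_i\na\phi\|_{L^2}^2$, from which the paper extracts the weighted-in-time bounds $\int_0^t(s+1)^\gamma\|\na c_i\|_{L^2}^2\,ds\lesssim 1+(t+1)^{\gamma-3/2}$ (its Step 7); these, or an equivalent Gronwall factor $e^{-\int_0^t a(s)\,ds}$ with $a\in L^1_t$, are needed to handle the term $\int_0^t(s+1)^{m}a(s)\|\na c_i\|_{L^2}^2\,ds$ that appears after multiplying by the integrating factor $(t+1)^m$. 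Your ``short bootstrap in the decay exponent'' gestures at this but does not supply either ingredient, so as written the second asserted bound is not established.
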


\begin{proof} The proof is divided into several steps. We define $\sigma = \sum\limits_{i=1}^{N} c_i$ to be the sum of the $N$ ionic concentrations. 

{\bf{Step 1. Compatibility Conditions.}}
Integrating each of the ionic concentrations over $\R^3$ and using the divergence-free condition satisfied by $u$, we obtain the conservation law 
\begin{equation}
    \RInt{c_i(t)} = \RInt{c_i(0)},
\end{equation} for each $i \in \{1, \dots, N\}$. Consequently, it follows that 
\begin{equation}
    \RInt{\rho(t)} = \RInt{\rho_0} = 0
\end{equation} after making use of the assumption $\RInt{\rho_0}  = 0$ imposed on the initial charge density. 
This guarantees compatibility conditions of the Poisson equation.

{\bf{Step 2. Uniform Bounds.}} Multiplying the equation obeyed by the ionic concentration $c_i$ by $z_i$ and summing over all indices $i \in \left\{1, \dots, N \right\}$, we infer that the charge density $\rho$ evolves in time according to
\begin{equation}
    \label{rhoPDE}
    \partial_t \rho + u \cdot \na \rho - D \D\rho = D z^2 \, \Div{(\sigma \na \phi)}.
\end{equation}
On one hand, multiplying the latter by $\l^{-2} \rho$ and integrating over $\R^3$, we obtain
\be
\begin{aligned}
    \fr{1}{2} \dfrac{d}{dt} \Lnorm{\l^{-1} \rho}{2}^2 + D \Lnorm{\rho}{2}^2 
    &= - \Rint (u \cdot \na \rho) \, \l^{-2} \rho - Dz^2 \Rint \sigma \na \phi \cdot \na \l^{-2} \rho
    \\
    &= -\Rint (u \cdot \na \rho) \l^{-2} \rho - D z^2 \Rint \sigma \na \phi \cdot \na \phi
    \\
    &= -\Rint (u \cdot \na \rho) \l^{-2} \rho  - D z^2 \Lnorm{\sqrt{\sigma} \na \phi}{2}^2 \label{rho_l-2rho}.
\end{aligned}
\ee
On the other hand, taking the $L^2$ inner product of \eqref{darcy_eq} with $u$ and integrating over $\R^3$ give
\begin{equation}
    \label{u_l2}
    \Lnorm{u}{2}^2 = -\Rint \rho \na \phi \cdot u.
\end{equation}
Combining \eqref{rho_l-2rho} and \eqref{u_l2} leads to the identity
\begin{equation}
    \Rint (u \cdot \na \rho) \l^{-2} \rho = \Rint \Div{(u \rho)} \l^{-2} \rho = - \Rint u \rho \cdot \na \l^{-2} \rho = - \Rint u \rho \cdot \na \phi = \Lnorm{u}{2}^2.
\end{equation}
Thus, we obtain
\begin{equation}
    \dfrac{1}{2} \dfrac{d}{dt} \Lnorm{\l^{-1} \rho}{2}^2 + D \Lnorm{\rho}{2}^2 + D z^2 \Lnorm{\sqrt{\sigma} \na \phi}{2}^2 + \Lnorm{u}{2}^2 = 0.
\end{equation}
Integrating in time from $0$ to $t$ and using the elliptic estimate \eqref{poisson1} yield the instantaneous bound
\be
\begin{aligned}
    &\fr{1}{2} \Lnorm{\l^{-1} \rho(t)}{2}^2 + D \TInt{\Lnorm{\rho}{2}^2} + Dz^2 \TInt {\Lnorm{\sqrt{\sigma} \na \phi}{2}^2} + \TInt{\Lnorm{u}{2}^2} 
    \\
    &\quad\quad= \fr{1}{2} \Lnorm{\l^{-1} \rho_0}{2}^2
    = \fr{1}{2} \Lnorm{\na \phi_0}{2}^2
    \lesssim \Lnorm{\rho_0}{1}^{\fr{2}{3}} \Lnorm{\rho_0}{2}^{\fr{1}{3}} \label{Uniform_Bound_1},
\end{aligned}
\ee for any $t \ge 0$. 

Next, we multiply each ionic concentration equation by $z^2$, sum over all indices $i \in \left\{1, \dots, N\right\}$, and obtain the evolution equation
\begin{equation}
    \label{sigmaPDE}
    \partial_t (z^2 \sigma) + u \cdot \na (z^2 \sigma) - D \D (z^2 \sigma) = Dz^2 \Div(\rho \na \phi) ,
\end{equation} obeyed by $\sigma$. Taking the $L^2$ inner product of the equations \eqref{rhoPDE} and  \eqref{sigmaPDE} obeyed by $\rho$ and $\sigma$ with $\rho$ and $\sigma$ respectively gives rise to the energy equalities 
\begin{equation}
    \label{rhoODE}
    \fr{1}{2} \dt \Lnorm{\rho}{2}^2 + D \Lnorm{\na \rho}{2}^2 = - D z^2 \RInt{\sigma \na \phi \cdot \na \rho} ,
\end{equation} and 
\begin{equation}
    \label{sigmaODE}
    \fr{z^2}{2} \dt \Lnorm{\sigma}{2}^2 + D z^2 \Lnorm{\na \sigma}{2}^2 = D z^2 \RInt{(\na \rho \cdot \na \phi) \sigma} + D z^2 \RInt{\rho \D \phi \sigma},
\end{equation} respectively. The nonlinear velocity terms vanish due to the divergence-free property obeyed by $u$. 
Adding \eqref{rhoODE} to \eqref{sigmaODE}, we obtain
\be
    \label{evolutionOfRhoAndSigma}
    \fr{1}{2} \dt \left( \Lnorm{\rho}{2}^2 + z^2 \Lnorm{\sigma}{2}^2 \right) + D \Lnorm{\na \rho}{2}^2 + D z^2 \Lnorm{\na \sigma}{2}^2 = - D z^2 \Rint \rho^2 \sigma 
   = -D z^2 \Lnorm{\rho \sqrt{\sigma}}{2}^2. 
\ee
Integrating in time from $0$ to $t$ gives rise to
\be
\begin{aligned}
    \Lnorm{\rho(t)}{2}^2 &+ z^2 \Lnorm{\sigma(t)}{2}^2 + 2D \TInt{\Lnorm{\na \rho(s)}{2}^2}
    \\
    &+ 2D z^2 \TInt{\Lnorm{\na \sigma(s)}{2}^2} + 2 D z^2 \TInt{\Lnorm{\rho \sqrt{\sigma}}{2}^2} = \Lnorm{\rho_0}{2}^2 + z^2 \Lnorm{\sigma_0}{2}^2 \label{Uniform_Bound_2},
\end{aligned}
\ee for any $t \ge 0$.

{\bf{Step 3. Pointwise Bounds for $|\widehat{\sigma}|$.}}
Summing the $N$ ionic concentration equations, we get
\begin{equation}
    \partial_t \sigma + u \cdot \na \sigma - D \D \sigma = D \Div (\rho \na \phi). \label{sigmaPDE_2}
\end{equation}
Taking the Fourier transform of \eqref{sigmaPDE_2}, we obtain
\begin{equation}
    \partial_t \wh{\sigma} - D \wh{\D \sigma} = D \widehat{\Div{(\rho \na \phi)}} - \wh{u \cdot \na \sigma}. \label{sigmaFourier} 
\end{equation}
Using the divergence-free property of $u$, we can deduce that
\begin{equation}
    \wh{u \cdot \na \sigma}(\xi) = \wh{\Div{(u \sigma)}} = 2 \pi i |\xi|\wh{u \sigma}.
\end{equation}
Using the latter and the boundedness of the Fourier transform of a function by its $L^1$ norm yields the differential inequality
\be
\begin{aligned}
    \partial_t \wh{\sigma} + 4 \pi^2 D |\xi|^2 \wh{\sigma} &= 2 \pi i |\xi| \wh{u \sigma} + 2 \pi i |\xi| \wh{\rho \na \phi}
    \\
    &\leq 2 \pi |\xi|(|\wh{u \sigma}| + |\wh{\rho \na \phi|})
    \\
    &\leq 2 \pi |\xi| (\Lnorm{u\sigma}{1} + \Lnorm{\rho \na \phi}{1}),
\end{aligned}
\ee
and thus we obtain
\begin{equation}
     \partial_t \wh{\sigma} + CD |\xi|^2 \wh{\sigma} \lesssim |\xi| (\Lnorm{u\sigma}{1} + \Lnorm{\rho \na \phi}{1}).
\end{equation}
We multiply both sides by the integrating factor $e^{CD |\xi|^2t}$ and integrate in time from $0$ to $t$ to get the following bound
\be\begin{aligned}
    |\wh{\sigma}(\xi, \, t)| &\lesssim |\wh{\sigma_0}(\xi)| + |\xi| \TInt{\Lnorm{u \sigma}{1}} + |\xi|\TInt{\Lnorm{\rho \na \phi}{1}}
    \\
    &\lesssim \Lnorm{\sigma_0}{1} + |\xi| \TInt{\Lnorm{u \sigma}{1}} + |\xi|\TInt{\Lnorm{\sigma \na \phi}{1}}.
\end{aligned} \ee
Using the Cauchy-Schwarz inequality, and the uniform bounds \eqref{Uniform_Bound_1} and \eqref{Uniform_Bound_2}, we have 
\begin{alignat}{2}
    |\wh{\sigma}(\xi, \, t)| &\lesssim \Lnorm{\sigma_0}{1} &+& |\xi| \left( \TInt{\Lnorm{u}{2}^2} \right)^{\frac{1}{2}} \left( \TInt{\Lnorm{\sigma}{2}^2} \right)^{\frac{1}{2}} \nonumber
    \\
    & &+& |\xi| \left( \TInt{\Lnorm{\sqrt{\sigma}}{2}^2} \right)^{\frac{1}{2}} \left( \TInt{\Lnorm{\sqrt{\sigma}\na \phi}{2}^2} \right)^{\frac{1}{2}} \label{sigma_pointwise_bound} 
    \\
    &\lesssim \Lnorm{\sigma_0}{1} &+& |\xi| \sqrt{t}, \nonumber
\end{alignat}
where $\Lnorm{\sqrt{\sigma(t)}}{2}^2 = \Lnorm{\sigma(t)}{1} = \Lnorm{\sigma_0}{1}$ is a constant.

{\bf{Step 4. $L^2$ Decay of $\rho$ and $\sigma$.}}
The $L^2$ evolution of the sum of $\rho$ and $z^2 \sigma$ is described by 
\be
\begin{aligned}
    \fr{1}{2} \dt \left( \Lnorm{\rho}{2}^2 + z^2 \Lnorm{\sigma}{2}^2 \right) + D \Lnorm{\na \rho}{2}^2 + D z^2 \Lnorm{\na \sigma}{2}^2 = -D z^2 \Lnorm{\rho \sqrt{\sigma}}{2}^2. 
\end{aligned}
\ee
In view of the non-negativity of the term $D \Lnorm{\na \rho}{2}^2$, it follows that 
\be
\begin{aligned}
    \fr{1}{2} \dt \left( \Lnorm{\rho}{2}^2 + z^2 \Lnorm{\sigma}{2}^2 \right) + D z^2 \Lnorm{\na \sigma}{2}^2 \leq -D z^2 \Lnorm{\rho \sqrt{\sigma}}{2}^2 \leq 0.
\end{aligned}
\ee
By Parseval's identity, we have
\begin{equation}
    \Lnorm{\na \sigma}{2}^2 = \Lnorm{\widehat{\na \sigma}}{2}^2 = \FInt{(\wh{\na \sigma})^2}= 4 \pi^2 \FInt{|\xi|^2 |\wh{\sigma}|^2} .
\end{equation}
Let $r: [0, \, \infty) \rightarrow [0, \, \infty)$ be a function to be determined later. 
We bound the dissipation term from below by
\begin{align}
    \Lnorm{\na \sigma}{2}^2 = C \FInt{|\xi|^2 |\wh{\sigma}|^2} \geq C \int_{|\xi| > r(t)}|\xi|^2 |\wh{\sigma}|^2 \, d\xi,
\end{align}
Thus, we obtain
\be
\begin{aligned}
    \int_{|\xi| > r(t)}|\xi|^2 |\wh{\sigma}|^2 \, d\xi &\geq r^2(t) \int_{|\xi| > r(t)} |\wh{\sigma}|^2 d \xi 
    \\
    &= r^2(t) \FInt{|\wh{\sigma}|^2} - r^2(t)\int_{|\xi| \leq r(t)} |\wh{\sigma}|^2 \, d\xi
    \\
    &= r^2(t)\Lnorm{\sigma}{2}^2 - r^2(t) \int_{|\xi| \leq r(t)} |\wh{\sigma}|^2 \, d\xi .
\end{aligned}
\ee
where we used Parseval's identity once more. Thus, we obtain the differential inequality
\begin{equation}
    \fr{1}{2} \dt \left( \Lnorm{\rho}{2}^2 + z^2 \Lnorm{\sigma}{2}^2 \right) + C D z^2 r^2(t) \Lnorm{\sigma}{2}^2 \leq C D z^2 r^2(t) \int_{|\xi| \leq r(t)} |\wh{\sigma}|^2 \, d\xi. \label{L2_fourier_splitting}
\end{equation}
By making use of the triangle inequality, we can bound
\begin{equation}
    \Lnorm{\rho}{2}^2 = \RInt{|\rho|^2} = \RInt{\left| \sum_{i=1}^N z_i c_i \right|^2} \lesssim z^2 \sum_{i=1}^N \RInt{|c_i|^2} \lesssim z^2 \Lnorm{\sigma}{2}^2 . \label{L2_rho_sigma_inequality}
\end{equation}
We split up the dissipation term in \eqref{L2_fourier_splitting} to obtain
\begin{align}
    \fr{1}{2} \dt \left( \Lnorm{\rho}{2}^2 + z^2 \Lnorm{\sigma}{2}^2 \right) + C\fr{D}{2} r^2 (t)\left(z^2 \Lnorm{\sigma}{2}^2 + z^2 \Lnorm{\sigma}{2}^2 \right) \leq CD z^2 r^2(t) \int_{|\xi| \leq r(t)} |\wh{\sigma}|^2 \, d\xi,
\end{align} 
which, by estimate \eqref{L2_rho_sigma_inequality}, simplifies to
\begin{align}
    \dt \left( \Lnorm{\rho}{2}^2 + z^2 \Lnorm{\sigma}{2}^2 \right) + c_0 D r^2 (t)\left(\Lnorm{\rho}{2}^2 + z^2 \Lnorm{\sigma}{2}^2 \right) \leq C D z^2 r^2(t) \int_{|\xi| \leq r(t)} |\wh{\sigma}|^2 \, d\xi \label{rho-sigma_fourier_inequality} .
\end{align}
Here $c_0$ is a positive constant depending only on $N$. By the pointwise bound \eqref{sigma_pointwise_bound}, Young's inequality, and Fubini's theorem for spherical coordinates, we have
\be
\begin{aligned}
    \int_{|\xi| \leq r(t)} |\wh{\sigma}(\xi, t)|^2 \, d\xi &\lesssim \int_{|\xi| \leq r(t)} \left( \Lnorm{\sigma_0}{1}^2 + |\xi|^2 t \right) \, d\xi
    \\
    &\lesssim \int_{0}^{r(t)} \int_{|\xi| = R} \left( \Lnorm{\sigma_0}{1}^2 + |\xi|^2 t \right) \, da(\xi) \, dR
    \\
    &\lesssim \int_0^{r(t)} \left( \Lnorm{\sigma_0}{1}^2 R^2 + t R^4 \right) \, dR
    \\
    &\lesssim \Lnorm{\sigma_0}{1}^2 \, r^3(t) + t \, r^5(t) .
\end{aligned}
\ee
Substituting this inequality into \eqref{rho-sigma_fourier_inequality} and choosing $r(t) = \sqrt{\fr{m}{c_0 D  (t+1)}}$ (where $m$ is a constant to be determined later) yields
\be
\begin{aligned}
    \dt \left( \Lnorm{\rho}{2}^2 + z^2 \Lnorm{\sigma}{2}^2 \right) + \fr{m}{t+1} \left(\Lnorm{\rho}{2}^2 + z^2 \Lnorm{\sigma}{2}^2 \right) &\lesssim \Lnorm{\sigma_0}{1}^2 \, r^5(t) + t \, r^7(t)
    \\
    &\lesssim \Lnorm{\sigma_0}{1}^2 \left( \fr{m}{t+1} \right)^{\fr{5}{2}} + t \left( \fr{m}{t+1} \right)^{\fr{7}{2}} .
\end{aligned}
\ee
Multiplying the differential inequality by the integrating factor 
\begin{equation}
    e^{\TInt{c_0 D r^2(s)}} = e^{\TInt{\fr{m}{s+1}}} = e^{m \ln (t+1)} = (t+1)^m,
\end{equation}
and integrating in time from $0$ to $t$, we get
\be
\begin{aligned}
    \left(\Lnorm{\rho}{2}^2 + z^2 \Lnorm{\sigma}{2}^2 \right) (t+1)^m \lesssim &\Lnorm{\rho_0}{2}^2 + z^2\Lnorm{\sigma_0}{2}^2 
    \\
    &+ \Lnorm{\sigma_0}{1}^2 \, m^{\fr{5}{2}} \TInt{(s+1)^{m-\fr{5}{2}}} + m^{\fr{7}{2}} \TInt{s (s+1)^{m-\fr{7}{2}}}.
\end{aligned}
\ee
Evaluating the integrals on the right-hand side and and dividing by $(t+1)^m$, we obtain
\be
\begin{aligned}
    \Lnorm{\rho}{2}^2 + z^2 \Lnorm{\sigma}{2}^2 &\lesssim \fr{\Lnorm{\rho_0}{2}^2 + z^2\Lnorm{\sigma_0}{2}^2}{(t + 1)^m} + \fr{\Lnorm{\sigma_0}{1}^2 \, m^\fr{5}{2} + m^\fr{7}{2}}{m - \fr{3}{2}} \fr{1}{(t+1)^\fr{3}{2}}.
\end{aligned}
\ee
Choosing $m = 2$, we obtain the following bound
\begin{equation}
    \Lnorm{\rho}{2}^2 + z^2 \Lnorm{\sigma}{2}^2 \lesssim \fr{1}{(t+1)^\fr{3}{2}}.
\end{equation} 
\noindent Finally, since $c_i \leq \sigma$, we get the desired decay of the ionic concentration as
\begin{equation}
    \Lnorm{c_i}{2}^2 \leq \Lnorm{\sigma}{2}^2 \lesssim \fr{1}{(t+1)^\fr{3}{2}},
    \label{ci_bound}
\end{equation}
proving the desired bound on $\Lnorm{c_i}{2}^2$. We now move to estimate the decay of $\na c_i$ in $L^2$.

{\bf{Step 5. $L^p$ Decay of $\sigma$.}}
We  multiply the PDE \eqref{sigmaPDE_2} by $\sigma^{p-1}$ to obtain the $L^p$ evolution of $\sigma$ as
\begin{equation}
    \fr{1}{p} \dt \Lnorm{\sigma}{p}^p + D (p-1) \Lnorm{\sigma^{\fr{p-2}{2}} \, \na \sigma }{2}^2 = D \RInt{\Div (\rho \na \phi) \, \sigma^{p-1}}.
    \label{Lp_evolution_sigma}
\end{equation}

{\bf{Substep 5.1. $L^3$ Decay of $\sigma$.}}
Specifically, the $L^3$ evolution of $\sigma$ is described by
\begin{equation}
    \fr{1}{3} \dt \Lnorm{\sigma}{3}^3 + 2D \Lnorm{\sqrt{\sigma} \, \na \sigma}{2}^2 = D \RInt{\Div(\rho \na \phi) \, \sigma^2}.
\end{equation}
Integrating by parts, then using the Holder and Young inequalities, we obtain
\vspace{0cm}
\be\begin{aligned}
    \\[-.75cm]
    \fr{1}{3} \dt \Lnorm{\sigma}{3}^3 + 2D \Lnorm{\sqrt{\sigma} \, \na \sigma}{2}^2 = -2D \RInt{\rho \na \phi \cdot \sigma \na \sigma}
    \leq D \Lnorm{\sqrt{\sigma} \, \na \sigma}{2}^2 + C \Lnorm{\rho \na \phi}{3}^2 \Lnorm{\sqrt{\sigma}}{6}^2.
    \label{L3_evolution_unsimplified}
\end{aligned}\ee
Simplifying \eqref{L3_evolution_unsimplified} and using that $\Lnorm{\sqrt{\sigma}}{6}^2 = \Lnorm{\sigma}{3}$ yield
\begin{equation}
    \dt \Lnorm{\sigma}{3}^3 + 3D \Lnorm{\sqrt{\sigma} \, \na \sigma}{2}^2 \leq C \Lnorm{\rho \na \phi}{3}^2 \Lnorm{\sigma}{3}.
    \label{L3_evolution_simplified}
\end{equation}
Now, if $\Lnorm{\sigma (\tau)}{3} = 0$ for some $\tau \ge 0$, then $\sigma(\tau) = 0$ almost everywhere in $\R^3$, and thus $\RInt{\sigma(\tau)} = 0$. By the conservation of the spatial mean of the ionic concentrations in time, we obtain
\begin{equation}
    \RInt{\sigma(\tau)} = \RInt{\sigma(t)} = 0,
\end{equation}
for all $t \geq 0$.
Thus, $\sigma(t) = 0$ almost everywhere in $\R^3$ for all $t \geq 0$. \\
Otherwise, in the case when $\Lnorm{\sigma(t)}{3} \not= 0$ for all $t \geq 0$, we divide the differential inequality \eqref{L3_evolution_simplified} describing the $L^3$ evolution of $\sigma$ by $\Lnorm{\sigma}{3}$ to obtain
\begin{equation}
    \fr{1}{\Lnorm{\sigma}{3}} \dt \Lnorm{\sigma}{3}^3 + 3D \fr{\Lnorm{\sqrt{\sigma} \, \na \sigma}{2}^2}{\Lnorm{\sigma}{3}} \leq C \Lnorm{\rho \na \phi}{3}^2,
\end{equation} 
yielding
\begin{equation}
    \dt \Lnorm{\sigma}{3}^2 \lesssim \Lnorm{\rho \na \phi}{3}^2.
\end{equation}
Using H\"older's inequality, the Sobolev inequality, the estimate \eqref{poisson2}, and the decaying bound of $c_i$ from \eqref{ci_bound}, we obtain
\begin{equation}
    \dt \Lnorm{\sigma}{3}^2 \lesssim \Lnorm{\rho \na \phi}{3}^2 \lesssim \Lnorm{\rho}{6}^2 \Lnorm{\na \phi}{6}^2 \lesssim \Lnorm{\na \rho}{2}^2 \Lnorm{\rho}{2}^2 \lesssim \fr{\Lnorm{\na \rho}{2}^2}{(t+1)^\fr{3}{2}}.
    \label{L3_sigma_DE}
\end{equation}
Now, we differentiate $(t+1) \Lnorm{\sigma}{3}^2$ in time and use the estimate \eqref{L3_sigma_DE} to get
\be
\begin{aligned}
    \dt \left((t+1) \Lnorm{\sigma}{3}^2 \right) = (t+1) \dt \Lnorm{\sigma}{3}^2 + \Lnorm{\sigma}{3}^2 \lesssim \fr{\Lnorm{\na \rho}{2}^2}{(t+1)^\fr{1}{2}} + \Lnorm{\sigma}{3}^2
    \lesssim  \Lnorm{\na \rho}{2}^2 + \Lnorm{\sigma}{3}^2.
\end{aligned}
\ee
Applying the Gagliardo-Nirenberg interpolation inequality, Young's inequality, and the decaying bound for $c_i$ in \eqref{ci_bound} gives
\be
\begin{aligned}
    \dt \left((t+1) \Lnorm{\sigma}{3}^2 \right) &\lesssim   \Lnorm{\na \rho}{2}^2 + \Lnorm{\sigma}{2} \Lnorm{\na \sigma}{2} \lesssim  \Lnorm{\na \rho}{2}^2 + \Lnorm{\sigma}{2}^2 + \Lnorm{\na \sigma}{2}^2
    \\
    &\lesssim  \Lnorm{\na \rho}{2}^2 + \fr{1}{(t+1)^\fr{3}{2}} + \Lnorm{\na \sigma}{2}^2.
\end{aligned}
\ee
Integrating in time from $0$ to $t$ and employing the bound from \eqref{Uniform_Bound_2}, we obtain
\be
\begin{aligned}
    (t+1) \, \Lnorm{\sigma (t)}{3}^2 &\lesssim \Lnorm{\sigma_0}{3}^2 +  \TInt{\Lnorm{\na \rho(s)}{2}^2} + \TInt{\Lnorm{\na \sigma(s)}{2}^2} + \TInt{\fr{1}{(s+1)^\fr{3}{2}}}
    \\
    &\lesssim \Lnorm{\sigma_0}{3}^2 +  \Lnorm{\sigma_0}{2}^2 + \Lnorm{\rho_0}{2}^2+1 \lesssim \Lnorm{\sigma_0}{2}^2 \Lnorm{\na \sigma_0}{2}^2 + \Lnorm{\sigma_0}{2}^2 + \Lnorm{\rho_0}{2}^2 \lesssim 1.
    \label{intermediate_step_for_L3_sigma}
\end{aligned}
\ee
Dividing by $(t+1)$, we establish the following decaying bound for $\Lnorm{\sigma}{3}$
\begin{equation}
    \Lnorm{\sigma(t)}{3}^2 \lesssim \fr{1}{t+1}. \label{L3_decay}
\end{equation}

{\bf{Substep 5.2. $L^4$ Decay of $\sigma$.}}
The $L^4$ evolution of $\sigma$ obeys the following differential equation
\begin{equation}
    \fr{1}{4} \dt \Lnorm{\sigma}{4}^4 + 3D \Lnorm{\sigma \, \na \sigma}{2}^2 = D \RInt{\Div{(\rho \na \phi)} \, \sigma^3} .
\end{equation}
Integrating by parts, then using the H\"older and Young inequalities, we obtain
\be\begin{aligned}
    \dt \Lnorm{\sigma}{4}^4 + 12D \Lnorm{\sigma \, \na \sigma}{2}^2 = -12D \RInt{\rho \na \phi \cdot \sigma^2 \na \sigma}
    \leq 6D \Lnorm{\sigma \, \na \sigma}{2}^2 + C \Lnorm{\rho \na \phi}{4}^2 \Lnorm{\sigma}{4}^2. \label{L4_norm_first_step}
\end{aligned}\ee
Simplifying \eqref{L4_norm_first_step} gives rise to
\be\begin{aligned}
    \dt \Lnorm{\sigma}{4}^4 \lesssim \Lnorm{\rho \na \phi}{4}^2 \Lnorm{\sigma}{4}^2 \label{L4_norm_second_step}.
\end{aligned}\ee
Following the same ideas of Substep 5.1, if $\Lnorm{\sigma(\tau)}{4} = 0$ for some $\tau \geq 0$, then $\sigma (t) = 0$ almost everywhere in $\R^3$, and so there is nothing to prove here. \\
Otherwise, we divide \eqref{L4_norm_second_step} by $\Lnorm{\sigma}{4}^2$ to reach the following differential inequality 
\be\begin{aligned}
    \dt \Lnorm{\sigma}{4}^2 \lesssim \Lnorm{\rho \na \phi}{4}^2.
\end{aligned}\ee
Using H\"older inequality, Young's inequality, the estimate \eqref{poisson3}, the Gagliardo-Nirenberg interpolation inequality, and the $L^2$ and $L^3$ decay of $c_i$ from \eqref{ci_bound} and \eqref{L3_decay} respectively, we infer that
\be\begin{aligned}
    \dt \Lnorm{\sigma}{4}^2 &\lesssim \Lnorm{\rho}{4}^2 \Lnorm{\na \phi}{\infty}^2 \lesssim \Lnorm{\rho}{4}^4 + \Lnorm{\na \phi}{\infty}^4 \lesssim \Lnorm{\rho}{4}^4 + \left( \Lnorm{\na \phi}{2}^{\fr{1}{4}} \Lnorm{\rho}{2}^{\fr{3}{4}} + \Lnorm{\rho}{4} \right)^4
    \\
    &\lesssim \Lnorm{\rho}{4}^4 + \Lnorm{\na \phi}{2} \Lnorm{\rho}{2}^3 \lesssim \Lnorm{\rho}{3}^2 \Lnorm{\na \rho}{2}^2 + \Lnorm{\na \phi}{2} \Lnorm{\rho}{2}^3
    \\
    &\lesssim  \fr{\Lnorm{\na \rho}{2}^2}{t+1} + \fr{\Lnorm{\na \phi}{2}}{(t+1)^\fr{9}{4}}. \label{L4_norm_derivative_simplified}
\end{aligned}\ee
Now, after differentiating $(t+1) \Lnorm{\sigma}{4}^2$ and using the bound \eqref{L4_norm_derivative_simplified}, we reach
\be\begin{aligned}
    \dt \left( (t+1) \Lnorm{\sigma}{4}^2 \right) &= (t+1) \dt \Lnorm{\sigma}{4}^2 + \Lnorm{\sigma}{4}^2 \lesssim  \Lnorm{\na \rho}{2}^2 +\fr{\Lnorm{\na \phi}{2}}{(t+1)^\fr{5}{4}} + \Lnorm{\sigma}{4}^2 \label{L4_norm_(t+1)derivative}.
\end{aligned}\ee
We note that, by the elliptic estimate \eqref{poisson1} and the bound derived in \eqref{Uniform_Bound_1}, we have
\begin{equation}
    \Lnorm{\na \phi}{2} \lesssim \Lnorm{\rho_0}{1}^\fr{2}{3} \Lnorm{\rho_0}{2}^\fr{1}{3} \lesssim 1.
    \label{boundedness_of_grad_phi} 
\end{equation} 
We then simplify \eqref{L4_norm_(t+1)derivative} and make use of Gagliardo-Nirenberg interpolation inequality, Young's inequality, and the $L^2$ decay of $c_i$ from \eqref{ci_bound} to deduce that
\be\begin{aligned}
    \dt \left( (t+1) \Lnorm{\sigma}{4}^2 \right) &\lesssim \Lnorm{\na \rho}{2}^2 + \fr{1}{(t+1)^\fr{5}{4}} + \Lnorm{\sigma}{2}^\fr{1}{2} \Lnorm{\na \sigma}{2}^\fr{3}{2} 
    \\
    &\lesssim \Lnorm{\na \rho}{2}^2 + \fr{1}{(t+1)^\fr{5}{4}} + \Lnorm{\sigma}{2}^2 + \Lnorm{\na \sigma}{2}^2
    \\
    &\lesssim \Lnorm{\na \rho}{2}^2 + \fr{1}{(t+1)^\fr{5}{4}} + \fr{1}{(t+1)^\fr{3}{2}} + \Lnorm{\na \sigma}{2}^2.
\end{aligned}\ee
Integrating in time from $0$ to $t$, interpolating, and using the bound \eqref{Uniform_Bound_2} yield
\be\begin{aligned}
    &(t+1) \Lnorm{\sigma (t)}{4}^2 
    \\
    &\lesssim \Lnorm{\sigma_0}{4}^2 +  \TInt{\Lnorm{\na \rho(s)}{2}^2} + \TInt{\Lnorm{\na \sigma(s)}{2}^2} + \TInt{\fr{1}{(t+1)^\fr{5}{4}}} + \TInt{\fr{1}{(t+1)^\fr{3}{2}}}
    \\
    &\lesssim \Lnorm{\sigma_0}{2}^\fr{1}{2} \Lnorm{\na \sigma_0}{2}^\fr{3}{2} +  \Lnorm{\rho_0}{2}^2 + \Lnorm{\sigma_0}{2}^2  + 1.
\end{aligned}\ee
Thus, dividing through by $(t+1)$, we obtain the desired decay on the $L^4$ norm of $\sigma$
\be\begin{aligned}
    \Lnorm{\sigma (t)}{4}^2 &\lesssim \fr{\Lnorm{\sigma_0}{2}^\fr{1}{2} \Lnorm{\na \sigma_0}{2}^\fr{3}{2} + \Lnorm{\rho_0}{2}^2 + \Lnorm{\sigma_0}{2}^2 + 1}{t+1} \lesssim \fr{1}{t+1}. \label{L4_decay}
\end{aligned}\ee

{\bf{Substep 5.3. $L^6$ Decay of $\sigma$.}}
Next, we have that the $L^6$ decay of $\sigma$ is governed by the following differential equation
\be\begin{aligned}
    \fr{1}{6} \dt \Lnorm{\sigma}{6}^6 + 5D \Lnorm{\sigma^2 \na \sigma}{2}^2 = D \RInt{\Div{(\rho \na \phi)} \, \sigma^5},
\end{aligned}\ee
which after applying  H\"older and Young inequalities simplifies to
\be\begin{aligned}
    \dt \Lnorm{\sigma}{6}^6 + 30 D \Lnorm{\sigma^2 \na \sigma}{2}^2 \leq 30 D \Lnorm{\rho \na \phi}{6} \Lnorm{\sigma^2 \na \sigma}{2} \Lnorm{\sigma^2}{3} \lesssim 15D \Lnorm{\sigma^2 \na \sigma}{2}^2 + \Lnorm{\rho \na \phi}{6}^2 \Lnorm{\sigma}{6}^4. \label{L6_decay_unsimplified}
\end{aligned}\ee
Here we used the identity $\Lnorm{\sigma^2}{3} = \Lnorm{\sigma}{6}^2$. Simplifying \eqref{L6_decay_unsimplified}, we thus deduce that
\begin{equation}
    \dt \Lnorm{\sigma}{6}^6 \lesssim \Lnorm{\rho \na \phi}{6}^2 \Lnorm{\sigma}{6}^4 \label{L6_decay_simplified} .
\end{equation}
Once more, we notice that if $\Lnorm{\sigma(\tau)}{6}^4 = 0$ for some $\tau$, then $\sigma(t) = 0$ almost everywhere in $\R^3$, and the decay of $\Lnorm{\sigma}{6}$ is trivially true. \\
Otherwise, we divide \eqref{L6_decay_simplified} by $\Lnorm{\sigma}{6}^4$ and obtain
\begin{equation}
    \dt \Lnorm{\sigma}{6}^2 \lesssim \Lnorm{\rho \na \phi}{6}^2.
\end{equation}
Making use of H\"older's inequality, the estimate \eqref{poisson3}, the boundedness of $\na \phi$ in $L^2$ derived in  \eqref{boundedness_of_grad_phi}, the Sobolev embedding theorem, and the $L^2$ and $L^4$ decay of $c_i$ from \eqref{ci_bound} and \eqref{L4_decay} respectively, we establish
\be\begin{aligned}
    \dt \Lnorm{\sigma}{6}^2 &\lesssim \Lnorm{\rho}{6}^2 \Lnorm{\na \phi}{\infty}^2 \lesssim \Lnorm{\rho}{6}^2 \left(\Lnorm{\na \phi}{2}^\fr{1}{4} \Lnorm{\rho}{2}^\fr{3}{4} + \Lnorm{\rho}{4} \right)^2 
    \\
    &\lesssim \Lnorm{\rho}{6}^2 \Lnorm{\na \phi}{2}^\fr{1}{2} \Lnorm{\rho}{2}^\fr{3}{2} + \Lnorm{\rho}{6}^2 \Lnorm{\rho}{4}^2 \lesssim  \Lnorm{\na \rho}{2}^2 \Lnorm{\rho}{2}^\fr{3}{2} + \Lnorm{\na \rho}{2}^2 \Lnorm{\rho}{4}^2 
    \\
    &\lesssim \fr{\Lnorm{\na \rho}{2}^2}{(t+1)^\fr{9}{8}} +  \fr{\Lnorm{\na \rho}{2}^2}{t+1}.
\end{aligned}\ee
By differentiating $(t+1) \Lnorm{\sigma}{6}^2$ and using the Sobolev embedding theorem, it follows that
\be\begin{aligned}
    \dt \left( (t+1) \Lnorm{\sigma}{6}^2 \right) &= (t+1) \dt \Lnorm{\sigma}{6}^2 + \Lnorm{\sigma}{6}^2 \lesssim  \fr{\Lnorm{\na \rho}{2}^2}{(t+1)^\fr{1}{8}} +  \Lnorm{\na \rho}{2}^2 + \Lnorm{\sigma}{6}^2 
    \\
    &\lesssim \Lnorm{\na \rho}{2}^2 + \Lnorm{\na \sigma}{2}^2.
\end{aligned}\ee
Integrating in time from $0$ to $t$, we obtain
\be\begin{aligned}
    (t+1) \, \Lnorm{\sigma(t)}{6}^2 \lesssim \Lnorm{\sigma_0}{6}^2 + \TInt{\Lnorm{\na \rho}{2}^2} + \Tint{\Lnorm{\na \sigma}{2}^2}.
\end{aligned}\ee
Invoking the Sobolev embedding theorem and bound \eqref{Uniform_Bound_2}, we conclude that
\be\begin{aligned}
    \Lnorm{\sigma(t)}{6}^2 \lesssim \fr{\Lnorm{\na \sigma_0}{2}^2}{t+1} + \fr{\Lnorm{\rho_0}{2}^2 + \Lnorm{\sigma_0}{2}^2}{t+1} \lesssim \fr{1}{t+1}. \label{L6_decay}
\end{aligned}\ee

{\bf{Step 6. Pointwise Bounds for $|\widehat{\na c_i}|$ for $i \in \left\{1, \dots, N\right\}$.}}
Taking the gradient of the Nernst-Planck PDE \eqref{nernst_planck} and then applying the Fourier transform leads to
\begin{equation}
    \partial_t \wh{\na c_i} + |\xi|^2 \wh{\na c_i} \lesssim |\xi|^2 \left( \lvert \wh{u  c_i} \rvert + \lvert \wh{c_i \na \phi} \rvert \right) \lesssim |\xi|^2 \left(\Lnorm{u c_i}{1} + \Lnorm{c_i \na \phi}{1} \right),
\end{equation}
giving rise to the following pointwise bound
\begin{equation}
    |\wh{\na c_i}| \lesssim \Lnorm{c_i(0)}{1} |\xi| + |\xi|^2 \sqrt{t}.
    \label{pointwise_bound_grad_c_i}
\end{equation}

{\bf{Step 7. Bounds for $\int_{0}^{t} (s+1)^{\gamma} \|\na c_i\|_{L^2}^2 \, ds$.}}
We multiply the ionic concentration equation \eqref{nernst_planck} by $c_i$ and integrate spatially over $\R^3$. Using the divergence-free property by $u$, the nonlinear term in $u$ vanishes. As for the electromigration terms, we integrate by parts and apply the Cauchy-Schwarz and Young inequalities to bound it by the sum of $\frac{D}{2} \|\na c_i\|_{L^2}^2$ and a constant multiple of $\|c_i \na \phi\|_{L^2}^2$. The latter gives rise to the differential inequality 
\begin{equation}
    \dt \Lnorm{c_i}{2}^2 + D \Lnorm{\na c_i}{2}^2 \leq C \Lnorm{c_i \na \phi}{2}^2. \label{c_i_evolution}
\end{equation}
Differentiating $(s+1)^\gamma \Lnorm{c_i}{2}^2$ and using \eqref{c_i_evolution}, we have
\be\begin{aligned}
    \dt \left( (s+1)^\gamma \Lnorm{c_i}{2}^2 \right) &+ D (s+1)^\gamma \Lnorm{\na c_i}{2}^2 \lesssim \gamma (s+1)^{\gamma -1} \Lnorm{c_i}{2}^2 + (s+1)^\gamma \Lnorm{c_i \na \phi}{2}^2
    \\
    &\lesssim \gamma (s+1)^{\gamma -1} \Lnorm{c_i}{2}^2 + (s+1)^\gamma \Lnorm{c_i}{2}^2 \Lnorm{\na \phi}{\infty}^2
    \\
    &\lesssim \gamma (s+1)^{\gamma -1} \Lnorm{c_i}{2}^2 + (s+1)^\gamma \Lnorm{c_i}{2}^2 (\Lnorm{\na \phi}{2}^\fr{1}{2} \Lnorm{\rho}{2}^\fr{3}{2} + \Lnorm{\rho}{4}^2)
    \\
    &\lesssim  \gamma  (s+1)^{\gamma - \fr{5}{2}} + (s+1)^{\gamma - \fr{21}{8}} +  (s+1)^{\gamma - \fr{5}{2}} .
\end{aligned}\ee
Next, we integrate in time from $0$ to $t$, take $\gamma \ge 2$, and conclude that
\be\begin{aligned}
   \TInt{(s+1)^\gamma \Lnorm{\na c_i}{2}^2} \lesssim 1 + (t+1)^{\gamma - \frac{3}{2}}.
    \label{integral_bound_for_grad_ci}
\end{aligned}\ee

{\bf{Step 8. $L^2$ Decay of $\na c_i$ for $i \in \left\{1, \dots, N\right\}$.}}
We start by multiplyng the Nernst-Planck PDE \eqref{nernst_planck} by $\Delta c_i$ and integrate spatially over $\R^3$ to get
\be\begin{aligned}
    \dfrac{1}{2} \dt \Lnorm{\na c_i}{2}^2 + D \Lnorm{\Delta c_i}{2}^2 \leq \RInt{| u \cdot \na c_i \Delta c_i|} + Dz^2 \RInt{|\Div{(\sigma \na \phi) \Delta c_i}|} .
    \label{inequality_for_na_c_i}
\end{aligned}\ee
We estimate
\be\begin{aligned}
    D z^2 \RInt{| \Div{(\sigma \na \phi) \Delta c_i } |} \leq Dz^2 \Lnorm{\Div{(\sigma \na \phi)}}{2} \Lnorm{\Delta c_i}{2} \le \fr{D}{6} \Lnorm{\Delta c_i}{2}^2 + C\Lnorm{\Div{(\sigma \na \phi)}}{2}^2,
\end{aligned}\ee
where
\be\begin{aligned}
    \Lnorm{\Div{(\sigma \na \phi)}}{2}^2 &\lesssim \sum_{j=1}^N \Lnorm{\Div{(c_j \na \phi)}}{2}^2 \lesssim \sum_{j=1}^N \Lnorm{\na c_j \cdot \na \phi + c_j \Delta \phi}{2}^2 
    \\
    &\lesssim \sum_{j=1}^N \Lnorm{\na c_j \cdot \na \phi}{2}^2 + \Lnorm{c_j \Delta \phi}{2}^2 \lesssim \sum_{j=1}^N \Lnorm{\na c_j}{3}^2 \Lnorm{\na \phi}{6}^2 + \Lnorm{c_j}{6}^2 \Lnorm{\Delta \phi}{3}^2 
    \\
    &\lesssim \sum_{j=1}^N \Lnorm{\na c_j}{2} \Lnorm{\Delta c_j}{2} \Lnorm{\na \na \phi}{2}^2 + \Lnorm{\na c_j}{2}^2 \Lnorm{\rho}{3}^2
    \\
    &\lesssim \sum_{j=1}^N \Lnorm{\na c_j}{2} \Lnorm{\Delta c_j}{2} \Lnorm{\rho}{2}^2 + \Lnorm{c_j}{2} \Lnorm{\Delta c_j}{2} \Lnorm{\rho}{2} \Lnorm{\na \rho}{2}
    \\
    &\le \fr{D}{6N} \sum\limits_{j=1}^{N} \Lnorm{\Delta c_j}{2}^2 + C\sum_{j=1}^{N} \Lnorm{\na c_j}{2}^2 \Lnorm{\rho}{2}^4 + C\Lnorm{c_j}{2}^2 \Lnorm{\rho}{2}^2 \Lnorm{\na \rho}{2}^2
    \\
    &\le \fr{D}{6N} \sum\limits_{j=1}^{N}  \Lnorm{\Delta c_j}{2}^2 + C \sum_{j=1}^{N} \fr{\Lnorm{\na c_j}{2}^2}{(t+1)^3}.
    \label{estimate_div_sigma_grad_phi}
\end{aligned}\ee
The above calculations follow from Minkowski, H\"older, and Young inequalities, as well as interpolation inequalities, and the bound established in \eqref{ci_bound} for the $L^2$ decay of $c_i$. \\
As a consequence, we obtain 
\be\begin{aligned}
    Dz^2 \RInt{| \Div{(\sigma \na \phi)} \Delta c_i |} \le \fr{D}{6} \Lnorm{\Delta c_i}{2}^2 + \fr{D}{6N}\sum\limits_{j=1}^{N}  \Lnorm{\Delta c_j}{2}^2 + C \sum_{j=1}^{N} \fr{\Lnorm{\na c_j}{2}^2}{(t+1)^3}.
\end{aligned}\ee
Next, using the Gagliardo-Nirenberg interpolation and Young inequalities, we bound
\be\begin{aligned}
    \RInt{| u \cdot \na c_i \Delta c_i |} &\leq \Lnorm{u}{6} \Lnorm{\na c_i}{3} \Lnorm{\Delta c_i}{2} \lesssim \Lnorm{u}{6} \Lnorm{\na c_i}{2}^\fr{1}{2} \Lnorm{\Delta c_i}{2}^\fr{3}{2} 
    \\
    &\leq \fr{D}{6} \Lnorm{\Delta c_i}{2}^2 + C \Lnorm{u}{6}^4 \Lnorm{\na c_i}{2}^2.
\end{aligned}\ee
We move to estimate $\Lnorm{u}{6}^4$ using the boundedness of the Leray projector in $L^6$, the estimate \eqref{poisson3}, the boundedness of $\na \phi$ by initial datum in \eqref{boundedness_of_grad_phi}, and the decaying bounds established in \eqref{ci_bound}, \eqref{L4_decay}, and \eqref{L6_decay} for $c_i$ in $L^2$, $L^4$, and $L^6$ respectively, reaching
\be\begin{aligned}
    \Lnorm{u}{6}^4 &= \Lnorm{\PP(\rho \na \phi)}{6}^4 \lesssim \Lnorm{\rho \na \phi}{6}^4 \lesssim \Lnorm{\rho}{6}^4 \Lnorm{\na \phi}{\infty}^4  \lesssim \Lnorm{\rho}{6}^4 \left( \Lnorm{\na \phi}{2} \Lnorm{\rho}{2}^3 + \Lnorm{\rho}{4}^4 \right) \\ 
    &\lesssim \sum_{j=1}^{N} \Lnorm{c_j}{6}^4 \left(\Lnorm{\na \phi}{2} \Lnorm{\rho}{2}^3 + \Lnorm{\rho}{4}^4 \right) \lesssim \sum_{j=1}^N \fr{1}{(t+1)^2} \left(\fr{1}{(t+1)^\fr{9}{4}} + \fr{1}{(t+1)^2} \right) \\
    &\lesssim \fr{1}{(t+1)^4}.
\end{aligned}\ee
Therefore,
\be\begin{aligned}
    \RInt{| u \cdot \na c_i \Delta c_i |} \le \fr{D}{6} \Lnorm{\Delta c_i}{2}^2 + C\fr{\Lnorm{\na c_i}{2}^2}{(t+1)^4}.
    \label{estimate_u_na_c_i_laplace_c_i}
\end{aligned}\ee
Substituting the bounds \eqref{estimate_div_sigma_grad_phi} and \eqref{estimate_u_na_c_i_laplace_c_i} into \eqref{inequality_for_na_c_i}, we now obtain the differential inequality
\be\begin{aligned}
    \dt \Lnorm{\na c_i}{2}^2 + \fr{4D}{3} \Lnorm{\Delta c_i}{2}^2 - \fr{D}{3N} \sum_{j=1}^N \Lnorm{\Delta c_j}{2}^2 \lesssim  \sum_{j=1}^N \fr{\Lnorm{\na c_j}{2}^2}{(t+1)^3} + \fr{\Lnorm{\na c_i}{2}^2}{(t+1)^4}.
\end{aligned}\ee
Summing over the ionic indices (from $i=1$ to $N$), we reach the evolution inequality for the sum of the $L^2$ norms of $\na c_i$, 
\be\begin{aligned}
    \dt \sum_{i=1}^N \Lnorm{\na c_i}{2}^2 + \fr{4D}{3} \sum_{i=1}^N \Lnorm{\Delta c_i}{2}^2 - \fr{D}{3N} \sum_{i=1}^N \sum_{j=1}^{N} \Lnorm{\Delta c_j}{2}^2 \lesssim  \sum_{i=1}^N \sum_{j=1}^N \fr{\Lnorm{\na c_j}{2}^2}{(t+1)^3} +  \sum_{i=1}^N \sum_{j=1}^N \fr{\Lnorm{\na c_i}{2}^2}{(t+1)^4}.
\end{aligned}\ee
After simplification, we infer that
\be\begin{aligned}
    \dt \sum_{i=1}^N \Lnorm{\na c_i}{2}^2 + D \sum_{i=1}^N \Lnorm{\Delta c_i}{2}^2 \lesssim \fr{1}{(t+1)^3} \sum_{i=1}^N \Lnorm{\na c_i}{2}^2 + \fr{1}{(t+1)^4} \sum_{i=1}^N \Lnorm{\na c_i}{2}^2.
    \label{l2_evolution_sum_grad_c_i}
\end{aligned}\ee
We now carry out the Fourier splitting technique on the dissipation term $\sum_{i=1}^N \Lnorm{\Delta c_i}{2}^2$ similarly to Step 4 above. We first note, that by Parseval's identity, we have
\be\begin{aligned}
    \Lnorm{\Delta c_i}{2}^2 &= \Lnorm{\wh{\Delta c_i}}{2}^2 = C\FInt{|\xi|^2 |\wh{\na c_i}|^2} \geq C \int_{|\xi| > r(t)} |\xi|^2 |\wh{\na c_i}|^2 \ d\xi 
    \\
    &\geq C r^2(t) \int_{|\xi| > r(t)} |\wh{\na c_i}|^2 \ d\xi = C r^2(t) \Lnorm{\na c_i}{2}^2 - C r^2(t) \int_{|\xi| \leq r(t)} |\wh{\na c_i}|^2 \ d\xi,
\end{aligned}\ee
for some function $r(t)$ to be determined later. Substituting this into \eqref{l2_evolution_sum_grad_c_i}, we reach the energy inequality
\be\begin{aligned}
    \dt \sum_{i=1}^N \Lnorm{\na c_i}{2}^2 &+ C D r^2(t) \sum_{i=1}^N \Lnorm{\na c_i}{2}^2 
    \\
    &\lesssim C r^2(t) \sum_{i=1}^N \int_{|\xi| \leq r(t)} |\wh{\na c_i}|^2 \ d \xi + \fr{1}{(t+1)^3} \sum_{i=1}^N \Lnorm{\na c_i}{2}^2 + \fr{1}{(t+1)^4} \sum_{i=1}^N \Lnorm{\na c_i}{2}^2.
    \label{l2_evolution_sum_grad_c_i_fourier_splitting}
\end{aligned}\ee
By the pointwise bound \eqref{pointwise_bound_grad_c_i} and Fubini's theorem for spherical coordinates, we obtain
\be\begin{aligned}
    \int_{|\xi| \leq r(t)} |\wh{\na c_i}|^2 \ d \xi &\lesssim \int_{|\xi| \leq r(t)} (\Lnorm{c_i(0)}{1}^2 |\xi|^2 + |\xi|^4 t) d\xi
    \\&= \int_0^{r(t)} \int_{|\xi| = R} (\Lnorm{c_i(0)}{1}^2 |\xi|^2 + |\xi|^4 t) \ da(\xi) \ dR
    \\
    &\lesssim \Lnorm{c_i(0)}{1}^2 \ r^5(t) + t \ r^7(t).
\end{aligned}\ee
Substituting this into \eqref{l2_evolution_sum_grad_c_i_fourier_splitting}, we get
\be\begin{aligned}
    \dt \sum_{i=1}^N \Lnorm{\na c_i}{2}^2 &+ C D r^2(t) \sum_{i=1}^N \Lnorm{\na c_i}{2}^2 
    \\
    &\lesssim \sum_{i=1}^N  \ r^7(t) + \sum_{i=1}^N t \ r^9(t) + \fr{1}{(t+1)^3} \sum_{i=1}^N \Lnorm{\na c_i}{2}^2 + \fr{1}{(t+1)^4} \sum_{i=1}^N \Lnorm{\na c_i}{2}^2.
\end{aligned}\ee
We choose $r(t) = \sqrt{\fr{m}{C D (t+1)}}$, where $m$ is a constant to be determined later. Multiplying by the integrating factor $(t+1)^m$ and integrating in time from $0$ to $t$, we find that
\be\begin{aligned}
    (t+1)^m \sum_{i=1}^{N} \Lnorm{\na c_i(t)}{2}^2 \lesssim &\sum_{i=1}^N \Lnorm{\na c_i (0)}{2}^2 + m^\fr{7}{2} \fr{(t+1)^{m-\fr{5}{2}}}{m - \fr{5}{2}} + m^\fr{9}{2} \fr{(t+1)^{m - \fr{5}{2}}}{m - \fr{5}{2}} 
    \\
    &+ \sum_{i=1}^N \TInt{(s+1)^{m-3} \Lnorm{\na c_i}{2}^2} + \sum_{i=1}^N \TInt{(s+1)^{m-4} \Lnorm{\na c_i}{2}^2},
\end{aligned}\ee
which, using the bound established in \eqref{integral_bound_for_grad_ci}, yields
\be\begin{aligned}
    (t+1)^m \sum_{i=1}^{N} &\Lnorm{\na c_i(t)}{2}^2 \lesssim 
    1 +  m^\fr{7}{2}   \fr{(t+1)^{m-\fr{5}{2}}}{m - \fr{5}{2}} + m^\fr{9}{2} \fr{(t+1)^{m - \fr{5}{2}}}{m - \fr{5}{2}} 
    +    (t+1)^{m-\fr{9}{2}}
    + (t+1)^{m-\fr{11}{2}},
\end{aligned}\ee
provided that $m -3 \ge 2$ and $m - 4 \ge 2$. We choose $m=6$ and simplify the inequality to finally deduce the decaying bound for $\na c_i$ in $L^2$ for all $i \in \{0, \dots, N\}$ as
\begin{alignat}{2}
    \sum_{i=1}^N \Lnorm{\na c_i(t)}{2}^2 
    &\lesssim \fr{1}{(t+1)^\fr{5}{2}}.
    \label{grad_ci_decay}
\end{alignat}
\end{proof}

An induction argument allows boostrapping of the time decay to higher-order derivatives of the ionic concentrations, as shown in the following proposition: 

\begin{prop} \label{prop_4}
Let $k \in \mathbb{N}$ such that $k \geq 2$. Suppose $c_i(0) \ge 0$, $c_i(0) \in H^k(\R^3) \cap L^1(\R^3)$ for all $i \in \left\{1, \dots, N\right\},$ and $\int_{\R^3} \rho_0 = 0$.
Then, there exists a positive constant $M_k$, depending on the parameters of the problem and on the $L^1$ and $H^k$ norms of the initial concentrations $c_i(0)$, such that
\be\begin{aligned}
    \Lnorm{\l^k c_i}{2}^2 \leq \fr{M_k}{(t+1)^{k + \fr{3}{2}}},
\end{aligned}\ee
for all $t \geq 0$ and $i \in \left\{1, \dots, N \right\}$. 
\end{prop}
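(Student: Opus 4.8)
The plan is to argue by induction on $k$, the cases $k=0$ and $k=1$ being exactly Proposition~\ref{prop_3}. So assume that $\Lnorm{\l^j c_i}{2}^2\lesssim (t+1)^{-j-\fr32}$ holds for all $j\in\{0,\dots,k-1\}$ and all $i$. Since $\Lnorm{\l^j c_i}{2}$ and $\Lnorm{\na^j c_i}{2}$ are comparable by Plancherel, this hypothesis, combined with Gagliardo--Nirenberg interpolation, also yields decay of $\na^j c_i$ in $L^p$ for $2\le p\le 6$ and $0\le j\le k-1$, and, through the Poisson equation \eqref{poisson} and the elliptic/Riesz estimates \eqref{poisson1}--\eqref{poisson4}, decay of each $\na^{j+1}\phi$ with $1\le j\le k-1$ at the rate of $\na^{j-1}\rho$; moreover $c_i$ itself decays in $L^2,L^3,L^4,L^6$ by Step~5 of Proposition~\ref{prop_3}. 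One then reproduces, at the $k$-th level, the three-step scheme of Steps~6--8.

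\emph{Pointwise frequency bound.} Applying $\na^k$ to \eqref{nernst_planck}, writing the advection term as $\na\cdot(u c_i)$ and keeping the electromigration term in divergence form, and then taking the Fourier transform, one obtains $\partial_t\wh{\na^k c_i}+CD|\xi|^2\wh{\na^k c_i}\lesssim |\xi|^{k+1}\big(\Lnorm{u c_i}{1}+\Lnorm{c_i\na\phi}{1}\big)$. Solving this linear differential inequality in $t$, using $|\wh{\na^k c_i(0)}(\xi)|\lesssim |\xi|^k\Lnorm{c_i(0)}{1}$, and bounding $\int_0^t\big(\Lnorm{u c_i}{1}+\Lnorm{c_i\na\phi}{1}\big)\,ds\lesssim\sqrt t$ by Cauchy--Schwarz together with the uniform bounds \eqref{Uniform_Bound_1}--\eqref{Uniform_Bound_2}, exactly as in Step~3, yields $|\wh{\na^k c_i}(\xi,t)|\lesssim \Lnorm{c_i(0)}{1}|\xi|^k+|\xi|^{k+1}\sqrt t$, the analogue of \eqref{pointwise_bound_grad_c_i}. \emph{Weighted integral bound.} Testing $\na^{k-1}$ of \eqref{nernst_planck} against $\na^{k-1}c_i$, integrating by parts once, multiplying by $(s+1)^\gamma$ and integrating in time, the term arising from differentiating the weight is controlled by the induction hypothesis $\Lnorm{\na^{k-1}c_i}{2}^2\lesssim(s+1)^{-(k+\fr12)}$, and the nonlinear terms by H\"older, Young, interpolation and lower-order decay, the contributions of $\na^k c_i$ being absorbed by the dissipation; this gives, for all $\gamma$ large enough, $\int_0^t(s+1)^\gamma\Lnorm{\na^k c_i}{2}^2\,ds\lesssim 1+(t+1)^{\gamma-k-\fr12}$, the analogue of \eqref{integral_bound_for_grad_ci}.

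The step I expect to be the main obstacle is the energy estimate for $\sum_i\Lnorm{\na^k c_i}{2}^2$. Testing $\na^k$ of \eqref{nernst_planck} against $\na^k c_i$, integrating by parts once in the advection and electromigration terms (the advection term with no derivative on $u$ vanishes by $\na\cdot u=0$), expanding with the Leibniz rule and summing over $i$, the goal is a differential inequality of the form
\[
\dt\sum_{i=1}^N\Lnorm{\na^k c_i}{2}^2+D\sum_{i=1}^N\Lnorm{\na^{k+1}c_i}{2}^2 \lesssim A(t)\sum_{i=1}^N\Lnorm{\na^k c_i}{2}^2+B(t),
\]
with $A(t)\lesssim (t+1)^{-1}$ and $B(t)$ decaying and integrable against the weights of the previous step. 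The delicate point is the bookkeeping of the products produced by Leibniz: for each of them one chooses the H\"older exponents so that the factor carrying the top number $k+1$ of derivatives goes, via Young's inequality and after summing over the ionic indices, into the dissipation $\sum_j\Lnorm{\na^{k+1}c_j}{2}^2$; a factor carrying $k$ derivatives on a concentration (equivalently on $\rho$) is kept in $L^2$ and, paired with a decaying lower-order factor, contributes to $A(t)\sum_j\Lnorm{\na^k c_j}{2}^2$; and the remaining factors, which carry at most $k-1$ derivatives on a concentration or are $\na\phi$-type quantities rendered one derivative smoother than $\rho$ by the elliptic estimates, are estimated by the induction hypothesis, the $L^p$ decay of $\sigma$, interpolation and \eqref{boundedness_of_grad_phi}. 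Products that turn out to be genuinely quadratic (or of higher degree) in the top-order energy — typically the term $c_i\,\na^{k+1}\phi$ that forces $c_i$ into $L^\infty$ — are linearized using uniform-in-time $H^k$ bounds for the $c_i$, which follow from the same energy identities without the sharp rate. It is precisely the nonlocal, smoothing character of the map $\rho\mapsto\na\phi$ that makes this derivative count close.

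Granted this differential inequality, one concludes as in Step~8. Bounding the dissipation below by $\Lnorm{\na^{k+1}c_i}{2}^2\ge Cr^2(t)\Lnorm{\na^k c_i}{2}^2-Cr^2(t)\int_{|\xi|\le r(t)}|\wh{\na^k c_i}|^2\,d\xi$, estimating the low-frequency tail by $\Lnorm{c_i(0)}{1}^2 r^{2k+3}(t)+t\,r^{2k+5}(t)$ by means of the pointwise bound above, choosing $r(t)=\sqrt{m/(CD(t+1))}$, multiplying by the integrating factor $(t+1)^m$, integrating in time and feeding in the weighted integral bound for the $A(t)$-contributions together with the decay of $B$, one arrives — after taking $m$ larger than $k+\fr32$ and than the thresholds generated by $A$ and by the weighted integral bound — at $\sum_i\Lnorm{\na^k c_i(t)}{2}^2\lesssim (t+1)^{-k-\fr32}$. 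Since $\Lnorm{\l^k c_i}{2}\lesssim\Lnorm{\na^k c_i}{2}$ by Plancherel, this is the asserted estimate.
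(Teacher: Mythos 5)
Your proposal follows essentially the same route as the paper: a pointwise Fourier bound $|\widehat{\Lambda^k c_i}(\xi,t)|\lesssim \|c_i(0)\|_{L^1}|\xi|^k+|\xi|^{k+1}\sqrt{t+1}$, a weighted integral bound $\int_0^t(s+1)^\gamma\|\Lambda^k c_i\|_{L^2}^2\,ds\lesssim 1+(t+1)^{\gamma-k-\frac12}$ carried through the induction, a top-order energy inequality closed by the one-derivative gain of $\rho\mapsto\nabla\phi$ (the paper uses Kato--Ponce to land exactly on $\frac{d}{dt}\|\Lambda^k c_i\|_{L^2}^2+D\|\Lambda^{k+1}c_i\|_{L^2}^2\lesssim (t+1)^{-1}\sum_j\|\Lambda^k c_j\|_{L^2}^2$, estimating $\|c_i\|_{L^3}^2\|\Lambda^k\nabla\phi\|_{L^6}^2\lesssim (t+1)^{-1}\|\Lambda^k\rho\|_{L^2}^2$ — so there is no $B(t)$ and no need for the uniform-in-time $H^k$ ``linearization'' you invoke as a fallback), followed by Fourier splitting with $r(t)\sim(t+1)^{-1/2}$ and the integrating factor $(t+1)^m$ with $m>k+2$. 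The only organizational difference is that the paper keeps the weighted integral bound as part of the induction hypothesis and re-derives it at the next order from the newly obtained decay, which is the same computation you propose to perform one level down.
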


\begin{proof} The proof is divided into several steps.

{\bf{Step 1. Pointwise Bound for $|\widehat{\Lambda^k c_i}|$ for $i \in \left\{1, \dots, N \right\}.$}}
We apply the $\l^k$ operator on the Nernst-Planck PDE \eqref{nernst_planck} and take the Fourier transform of the resulting equation, reaching
\be\begin{aligned}
    \partial_t \wh{\l^k c_i} + CD |\xi|^2 \wh{\l^k c_i} \lesssim |\xi|^{k+1} \left(\Lnorm{u}{2} \Lnorm{c_i}{2} + \Lnorm{\sqrt{\sigma} \na \phi}{2}\right).
\end{aligned}\ee
After integration by parts and using the appropriate decaying bounds, we obtain the pointwise bound
\be\begin{aligned}
    \left| \wh{\l^k c_i (t)} \right| \lesssim \Lnorm{c_i (0)}{1} |\xi|^k + |\xi|^{k+1} + |\xi|^{k+1} \sqrt{t} \lesssim \Lnorm{c_i (0)}{1} |\xi|^k + |\xi|^{k+1} \sqrt{t+1}.
    \label{pointwise_bound_for_Lambda_k_c_i}
\end{aligned}\ee

{\bf{Step 2. $L^2$ Decay of $\Lambda^k c_i$ for $i \in \left\{1, \dots, N \right\}.$}} We establish the $L^2$ decay of $\Lambda^k c_i$ by induction. Indeed, we suppose that  the following estimates 
\begin{align}
    \Lnorm{\l^{k-1} c_i}{2}^2 &\lesssim \fr{1}{(t+1)^{k + \fr{1}{2}}}
    \label{decay_of_lambda_k-1_c_i},
    \\
    \TInt{(s+1)^\gamma \Lnorm{\l^k c_i (s)}{2}^2} &\lesssim 1 + (t+1)^{\gamma - k - \fr{1}{2}},
    \label{integral_bound_for_Lambda_k}
\end{align} hold with $\gamma > k + \fr{1}{2}$, and we prove that 
\begin{align}
    \Lnorm{\l^{k} c_i}{2}^2 &\lesssim \fr{1}{(t+1)^{k + \fr{3}{2}}}
    \label{decay_of_lambda_k_c_i},
    \\
    \TInt{(s+1)^\gamma \Lnorm{\l^{k+1} c_i (s)}{2}^2} &\lesssim 1 + (t+1)^{\gamma - k - \fr{3}{2}},
    \label{integral_bound_for_Lambda_k+1}
\end{align} hold for  $\gamma > k + \fr{3}{2}$.
We begin by multiplying the Nernst-Planck PDE \eqref{nernst_planck} by $\l^{2k} c_i$ and integrating spatially to obtain
\be\begin{aligned}
    \fr{1}{2} \dt \Lnorm{\l^k c_i}{2}^2 + D \Lnorm{\l^{k+1} c_i}{2}^2 \leq  D |z_i| \left| \RInt{\Div{(c_i \na \phi)} \ \l^{2k} c_i} \right| + \left| \RInt{u \cdot \na c_i \ \l^{2k} c_i} \right|.
    \label{inequality_for_lambda_k_c_i_intermediate}
\end{aligned}\ee
We simplify the first integral term of \eqref{inequality_for_lambda_k_c_i_intermediate} through integration by parts and the H\"older and Young inequalities, and we get
\be\begin{aligned}
    D |z_i| \left| \RInt{\Div{(c_i \na \phi)} \ \l^{2k} c_i} \right| &\leq D |z_i| \left| \RInt{\l^{k-1} \Div{(c_i \na \phi)} \ \l^{k+1} c_i}\right| \\
    &\leq \fr{D}{6} \Lnorm{\l^{k+1} c_i}{2}^2 + C \Lnorm{\l^k (c_i \na \phi)}{2}^2.
\end{aligned}\ee
Using the Kato-Ponce inequality (see \cite{kato1988commutator}), the estimate \eqref{poisson4}, the $L^3$ decay of $c_i$ \eqref{L3_decay}, the Sobolev embedding theorem, the $L^2$ decay of $c_i$ and $\na c_i$ found in \eqref{ci_bound} and \eqref{grad_ci_decay} respectively, we have
\be\begin{aligned}
    C \Lnorm{\l^k (c_i \na \phi)}{2}^2 &\lesssim \Lnorm{\l^k c_i}{2}^2 \Lnorm{\na \phi}{\infty}^2 + \Lnorm{c_i}{3}^2 \Lnorm{\l^k \na \phi}{6}^2 
    \\
    &\lesssim \Lnorm{\l^k c_i}{2}^2 \left(\Lnorm{\rho}{2}^2 + \Lnorm{\na \rho}{2}^2 \right) + \fr{\Lnorm{\l^k \rho}{2}^2}{t+1} \lesssim \fr{\Lnorm{\l^k c_i}{2}^2}{(t+1)^\fr{3}{2}} + \sum_{j=1}^N \fr{\Lnorm{\l^k c_j}{2}^2}{t+1}.
    \label{Lambda_k_c_i_nabla_phi}
\end{aligned}\ee
Next, we treat the second integral term of \eqref{inequality_for_lambda_k_c_i_intermediate} by using integration by parts, the divergence-free property of $u$, and the H\"older and Young inequalities, as follows
\be\begin{aligned}
    \left| \RInt{u \cdot \na c_i \ \l^{2k} c_i}\right| \leq \left| \RInt{\l^{k-1} \Div{(u c_i)}\ \l^{k+1} c_i}\right| \leq \fr{D}{6} \Lnorm{\l^{k+1} c_i}{2}^2 + C \Lnorm{\l^k (u c_i)}{2}^2.
\end{aligned}\ee
Here, to bound $\Lnorm{\l^k (u c_i)}{2}^2$, we use the Kato-Ponce inequality, the boundedness of the Leray projector in $L^2$, the continuous embedding of $W^{1,4}$ in $L^\infty$, the Gagliardo-Nirenberg inequality, the Sobolev embedding theorem, Young's inequality, the estimate \eqref{poisson4}, the bound on $\Lnorm{\l^k (c_i \na \phi)}{2}^2$ derived in \eqref{Lambda_k_c_i_nabla_phi}, and the $L^2$ decay of $c_i$ and $\na c_i$ obtained in \eqref{ci_bound} and \eqref{grad_ci_decay} respectively, to get
\be\begin{aligned}
    C &\Lnorm{\l^k (u c_i)}{2}^2 
    \lesssim \Lnorm{\l^k u}{2}^2 \Lnorm{c_i}{\infty}^2 + \Lnorm{u}{6}^2 \Lnorm{\l^k c_i}{3}^2 
    \\
    \quad&\lesssim \Lnorm{\l^k (\rho \na \phi)}{2}^2 \Wnorm{c_i}{1}{4}^2 + \Lnorm{\rho \na \phi}{6}^2 \Lnorm{\l^k c_i}{2} \Lnorm{\l^{k+1} c_i}{2} 
    \\
    &\leq C \sum_{j=1}^N \Lnorm{\l^k (c_j \na \phi)}{2}^2 \left(\Lnorm{c_i}{4}^2 + \Lnorm{\na c_i}{4}^2 \right) + \fr{D}{6} \Lnorm{\l^{k+1} c_i}{2}^2 + C \Lnorm{\rho \na \phi}{6}^4 \Lnorm{\l^k c_i}{2}^2 
    \\
    &\leq C \sum_{j=1}^N \Lnorm{\l^k (c_j \na \phi)}{2}^2 \left(\Lnorm{c_i}{2}^\fr{1}{2} \Lnorm{\na c_i}{2}^\fr{3}{2} + \Lnorm{\na c_i}{2}^\fr{1}{2} \Lnorm{\D c_i}{2}^\fr{3}{2} \right) 
    \\&\quad\quad\quad\quad+ \fr{D}{6} \Lnorm{\l^{k+1} c_i}{2}^2 + C \Lnorm{\rho}{6}^4 \Lnorm{\na \phi}{\infty}^4 \Lnorm{\l^k c_i}{2}^2 
    \\
    &\leq \fr{C}{(t+1)^\fr{9}{4}} \sum_{j=1}^N \Lnorm{\l^k (c_j \na \phi)}{2}^2 + \fr{D}{6} \Lnorm{\l^{k+1} c_i}{2}^2 + C\Lnorm{\na \rho}{2}^4 \left(\Lnorm{\rho}{2}^4 + \Lnorm{\na \rho}{2}^4 \right) \Lnorm{\l^k c_i}{2}^2 
    \\
    &\leq \fr{C}{(t+1)^\fr{9}{4}} \sum_{j=1}^N \left(\fr{\Lnorm{\l^k c_j}{2}^2}{(t+1)^\fr{3}{2}} + \sum_{m=1}^N \fr{\Lnorm{\l^k c_m}{2}^2}{t+1} \right) + \fr{D}{6} \Lnorm{\l^{k+1} c_i}{2}^2 + C \fr{\Lnorm{\l^k c_i}{2}^2}{(t+1)^8} 
    \\
    &\leq \fr{D}{6} \Lnorm{\l^{k+1} c_i}{2}^2 + C \fr{\Lnorm{\l^k c_i}{2}^2}{(t+1)^8} + C \sum_{j=1}^N \fr{\Lnorm{\l^k c_j}{2}^2}{(t+1)^\fr{13}{4}}.
    \label{Lambda_k_u_c_i}
\end{aligned}\ee
Thus, plugging \eqref{Lambda_k_c_i_nabla_phi} and \eqref{Lambda_k_u_c_i} into \eqref{inequality_for_lambda_k_c_i_intermediate}, we deduce the following 
\be\begin{aligned}
    \dt \Lnorm{\l^k c_i}{2}^2 + D \Lnorm{\l^{k+1} c_i}{2}^2 \lesssim \fr{\Lnorm{\l^k c_i}{2}^2}{(t+1)^\fr{3}{2}} + \sum_{j=1}^N \fr{\Lnorm{\l^k c_j}{2}^2}{t+1}.
    \label{differential_inequality_for_Lambda_k_c_i}
\end{aligned}\ee
To solve this differential inequality, we apply the Fourier splitting technique on $\Lnorm{\l^{k+1} c_i}{2}^2$. Applying Parseval's identity, 
\be\begin{aligned}
    \Lnorm{\l^{k+1} c_i}{2}^2 &= \Lnorm{\wh{\l^{k+1} c_i}}{2}^2 = C \FInt{|\xi|^2 \left| \wh{\l^{k} c_i} \right|^2} \geq C \int_{|\xi| > r(t)} |\xi|^2 \left| \wh{\l^{k} c_i} \right|^2 \ d\xi \\
    &\geq C r^2(t) \int_{|\xi| > r(t)} \left| \wh{\l^{k} c_i} \right|^2 \ d\xi = C r^2(t) \Lnorm{\l^{k} c_i}{2}^2  - C r^2(t) \int_{|\xi| \leq r(t)} \left| \wh{\l^{k} c_i} \right|^2 \ d\xi,
    \label{parseval_for_Lambda_k+1_c_i}
\end{aligned}\ee
with $r(t)$ to be determined later. In view of \eqref{parseval_for_Lambda_k+1_c_i}, the pointwise bound \eqref{pointwise_bound_for_Lambda_k_c_i}, and Fubini's theorem for spherical coordinates, we find that
\be\begin{aligned}
    \int_{|\xi| \leq r(t)} \left| \wh{\l^k c_i} \right|^2 \ d\xi &\lesssim \int_{|\xi| \leq r(t)} \left( \Lnorm{c_i (0)}{1}^2 |\xi|^{2k} + |\xi|^{2k+2} (t+1) \right) \ d\xi \\
    &\lesssim \int_0^{r(t)} \int_{|\xi| = R} \left( \Lnorm{c_i (0)}{1}^2 |\xi|^{2k} + |\xi|^{2k+2} (t+1) \right) \ d \sigma (\xi) \ dR \\
    &\lesssim \Lnorm{c_i (0)}{1}^2 \ r^{2k+3} (t) + (t+1) \ r^{2k+5}(t).
    \label{fourier_splitting_for_Lambda_c_i_intermediate_step}
\end{aligned}\ee
Using \eqref{fourier_splitting_for_Lambda_c_i_intermediate_step}, the differential inequality \eqref{differential_inequality_for_Lambda_k_c_i} becomes
\be\begin{aligned}
    \dt \Lnorm{\l^k c_i}{2}^2 + CD r^2(t) \Lnorm{\l^k c_i}{2}^2 \lesssim r^{2k+5} (t) + (t+1) \ r^{2k+7} (t) + \fr{\Lnorm{\l^k c_i}{2}^2}{(t+1)^\fr{3}{2}} + \sum_{j=1}^N \fr{\Lnorm{\l^k c_j}{2}^2}{t+1}.
    \label{fourier_splitting_for_Lambda_c_i_intermediate_step_2}
\end{aligned}\ee
We choose $r(t) = \sqrt{\fr{m}{CD (t+1)}}$ (where the choice of $m$ is still pending). Then, we multiply \eqref{fourier_splitting_for_Lambda_c_i_intermediate_step_2} by $(t+1)^m$ to get
\be\begin{aligned}
    \dt &\left((t+1)^m \Lnorm{\l^k c_i}{2}^2 \right) \\
    &\lesssim m^{k+\fr{5}{2}} (t+1)^{m - k - \fr{5}{2}} + m^{k + \fr{7}{2}} (t+1)^{m - k - \fr{5}{2}} + (t+1)^{m - \fr{3}{2}} \Lnorm{\l^k c_i}{2}^2 + \sum_{j=1}^N (t+1)^{m - 1} \Lnorm{\l^k c_j}{2}^2.
\end{aligned}\ee
Integrating in time from $0$ to $t$, we obtain
\be\begin{aligned}
    (t &+1)^m \Lnorm{\l^k c_i (t)}{2}^2 
    \\
    &\lesssim \Lnorm{\l^k c_i (0)}{2}^2 + \fr{(t+1)^{m - k - \fr{3}{2}}}{m - k - \fr{3}{2}} + \TInt{(t+1)^{m - \fr{3}{2}} \Lnorm{\l^k c_i}{2}^2} + \sum_{j=1}^N \TInt{(t+1)^{m-1} \Lnorm{\l^k c_j}{2}^2},
\end{aligned}\ee
which, due to the inductive hypothesis \eqref{integral_bound_for_Lambda_k}, simplifies to
\be\begin{aligned}
    (t+1)^m \Lnorm{\l^k c_i (t)}{2}^2 \lesssim 1 + \fr{(t+1)^{m - k - \fr{3}{2}}}{m - k - \fr{3}{2}} + (t+1)^{m - k - 2} + (t+1)^{m - k - \fr{3}{2}},
\end{aligned}\ee provided that $m-\frac{3}{2} > k + \frac{1}{2}$ and $m - 1 > k + \frac{1}{2}$. 
We choose $m > k + 2$ and divide by $(t+1)^m$, establishing the desired decay for $\Lnorm{\l^k c_i}{2}^2$ as in \eqref{decay_of_lambda_k_c_i}.
We still have to show that the integral bound \eqref{integral_bound_for_Lambda_k+1} holds. To do that, we start from the previously derived differential inequality for $\Lnorm{\l^k c_i}{2}^2$ found in \eqref{differential_inequality_for_Lambda_k_c_i} and use the newly established decay on $\Lnorm{\l^k c_i}{2}^2$ in \eqref{decay_of_lambda_k_c_i}, obtaining
\be\begin{aligned}
    \dt \Lnorm{\l^k c_i}{2}^2 + D \Lnorm{\l^{k+1} c_i}{2}^2 \lesssim \fr{1}{(t+1)^{k+ 3}} + \fr{1}{(t+1)^{k + \fr{5}{2}}} \lesssim \fr{1}{(t+1)^{k+ \fr{5}{2}}}.
\end{aligned}\ee
Multiplying the latter by $(s+1)^\gamma$ and using the decay of $\Lnorm{\l^k c_i}{2}^2$ \eqref{decay_of_lambda_k_c_i}, we have
\be\begin{aligned}
    \dt \left( (s+1)^\gamma \Lnorm{\l^k c_i}{2}^2 \right) + D (s+1)^\gamma \Lnorm{\l^{k+1} c_i}{2}^2 \lesssim \gamma (s+1)^{\gamma - k - \fr{5}{2}} + (s+1)^{\gamma - k - \fr{5}{2}}.
\end{aligned}\ee
Integrating in time from $0$ to $t$ and choosing $\gamma > k + \fr{3}{2}$, we derive the desired bound for $\TInt{(s+1)^\gamma \allowbreak \Lnorm{\l^{k+1} c_i}{2}^2}$, as stated in \eqref{integral_bound_for_Lambda_k+1}.
\end{proof}

\section{Sharpness of the Decay Rate} \la{s4}

In this section, we address the sharpness of the time decay of all derivatives of the ionic concentrations. 

\begin{prop} \label{prop_5}
Suppose $c_i(0) \ge 0$, $c_i(0) \in H^k(\R^3) \cap L^1(\R^3)$ for all $i \in \left\{1, \dots,N\right\},$ and $\int_{\R^3} \rho_0 = 0$. For each $i \in \left\{1, \dots, N\right\}$, we denote by $\tilde{c}_i$ the solution to the heat equation $\pa_t \tilde{c}_i - D \Delta \tilde{c}_i = 0$ on $\R^3$ (with decay at infinity), with initial data $\tilde{c}_i(0) = c_i(0)$. There exists a positive constant $O_k$ depending on the parameters of the problem and on the $L^1$ and $H^k$ norms of the initial concentrations, such that 
\be\begin{aligned}
    \Lnorm{\l^k (c_i - \tilde{c}_i)}{2}^2 \leq \fr{O_k}{(t+1)^{k+2}}
    \label{decay_bound_for_c_i-Tilde_c_i}
\end{aligned}\ee
for all $t \geq 0$ and for all $i \in \left\{1, \dots, N\right\}$. 
\end{prop}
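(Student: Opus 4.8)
The plan is to write the equation satisfied by the difference $w_i := c_i - \tilde{c}_i$, which is a forced heat equation, and then run the same Fourier-splitting plus bootstrapping machinery used in the proofs of Propositions \ref{prop_3} and \ref{prop_4}, now exploiting the fact that the forcing term is \emph{quadratic} (hence small) and that $w_i(0)=0$ (hence no low-frequency contribution from the initial data). Subtracting the heat equation from \eqref{nernst_planck}, we obtain
\[
\partial_t w_i - D\Delta w_i = -\,u\cdot\nabla c_i + z_i D_i\,\nabla\cdot(c_i\nabla\phi) =: F_i,
\]
with $w_i(0)=0$. Taking the Fourier transform and applying $\l^k$, we get $\partial_t\widehat{\l^k w_i} + CD|\xi|^2\widehat{\l^k w_i} = \widehat{\l^k F_i}$, and since $F_i$ is a divergence of a product, $|\widehat{\l^k F_i}|\lesssim |\xi|^{k+1}\big(\|uc_i\|_{L^1}+\|c_i\nabla\phi\|_{L^1}\big)\lesssim |\xi|^{k+1}\big(\|u\|_{L^2}\|c_i\|_{L^2}+\|\sqrt\sigma\|_{L^2}\|\sqrt\sigma\nabla\phi\|_{L^2}\big)$. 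Integrating the ODE in $\xi$ with $\widehat{\l^k w_i}(\xi,0)=0$ and using the uniform-in-time integrability of $\|u\|_{L^2}^2$ and $\|\sqrt\sigma\nabla\phi\|_{L^2}^2$ from \eqref{Uniform_Bound_1}, together with the conservation $\|\sqrt\sigma\|_{L^2}^2=\|\sigma_0\|_{L^1}$ and the $L^2$ decay \eqref{ci_bound}, yields a pointwise bound of the form $|\widehat{\l^k w_i}(\xi,t)|\lesssim |\xi|^{k+1}\sqrt{t+1}$ (note: \emph{no} $|\xi|^k\|c_i(0)\|_{L^1}$ term, because $w_i(0)=0$ — this is the crucial gain that upgrades the power in the denominator from $k+\frac32$ to $k+2$).

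Next I would set up the energy inequality. Multiplying the $w_i$-equation by $\l^{2k}w_i$ and integrating gives
\[
\tfrac12\tfrac{d}{dt}\|\l^k w_i\|_{L^2}^2 + D\|\l^{k+1}w_i\|_{L^2}^2 \le \big|\textstyle\RInt{\l^{k-1}(u c_i)\cdot\,\l^{k+1}w_i}\big| + \big|\RInt{\l^{k-1}(c_i\nabla\phi)\,\l^{k+1}w_i}\big|,
\]
and absorbing $\frac{D}{c}\|\l^{k+1}w_i\|_{L^2}^2$ on the left via Young's inequality reduces the right-hand side to a constant times $\|\l^k(uc_i)\|_{L^2}^2 + \|\l^k(c_i\nabla\phi)\|_{L^2}^2$. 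These two terms are estimated exactly as in \eqref{Lambda_k_c_i_nabla_phi} and \eqref{Lambda_k_u_c_i} of Proposition \ref{prop_4} (Kato–Ponce, the elliptic estimates \eqref{poisson1}--\eqref{poisson4}, Gagliardo–Nirenberg, and the already-established decay of $c_i$, $\nabla c_i$, and $\l^j c_i$ in various $L^p$), producing a bound of the form $C(t+1)^{-3/2-\epsilon}\big(\sum_j\|\l^k c_j\|_{L^2}^2\big)\lesssim (t+1)^{-(k+3)-\epsilon}$ — a genuinely fast-decaying, \emph{forcing-type} right-hand side, since here the product structure forces at least one full-derivative factor $c_j$ or $\nabla\phi$ that already decays. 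Crucially this right-hand side does not see $w_i$ itself (or only at a harmlessly subcritical level), so the differential inequality is essentially $\frac{d}{dt}\|\l^k w_i\|_{L^2}^2 + D\|\l^{k+1}w_i\|_{L^2}^2 \lesssim (t+1)^{-k-3}$ plus lower-order self-coupling.

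Then I would apply the Fourier splitting: bound $\|\l^{k+1}w_i\|_{L^2}^2 \ge Cr^2(t)\|\l^k w_i\|_{L^2}^2 - Cr^2(t)\int_{|\xi|\le r(t)}|\widehat{\l^k w_i}|^2\,d\xi$, use the pointwise bound to estimate $\int_{|\xi|\le r(t)}|\widehat{\l^k w_i}|^2\,d\xi\lesssim (t+1)\,r^{2k+5}(t)$ in spherical coordinates, choose $r(t)^2=\frac{m}{CD(t+1)}$ with $m$ large (say $m>k+\frac52$), multiply by the integrating factor $(t+1)^m$, and integrate from $0$ — here $\|\l^k w_i(0)\|_{L^2}=0$, so there is no initial-data boundary term. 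Balancing the contributions $(t+1)\,r^{2k+7}(t)\sim (t+1)^{-k-5/2-1}$... wait — more carefully, after multiplying by $(t+1)^m$ one integrates $(t+1)^{m-k-3}$-type terms, giving $(t+1)^{m-k-2}$ after integration, hence $\|\l^k w_i(t)\|_{L^2}^2\lesssim (t+1)^{-k-2}$ after dividing by $(t+1)^m$. The lower-order self-coupling terms are handled by a short secondary bootstrap (first prove an integral bound $\int_0^t(s+1)^\gamma\|\l^k w_i\|_{L^2}^2\,ds$ is controlled, analogous to Step 7 of Proposition \ref{prop_3}), or simply by a Grönwall argument since the self-coupling coefficient is integrable in time. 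The induction on $k$ parallels Proposition \ref{prop_4} verbatim. \textbf{The main obstacle} is bookkeeping: one must verify that \emph{every} nonlinear term on the right-hand side of the energy inequality for $w_i$ decays strictly faster than $(t+1)^{-k-2}$ when integrated — in particular that the worst term, coming from $\|\l^k(u c_i)\|_{L^2}^2$ where $u=\PP(\rho\nabla\phi)$ is itself quadratic, carries enough decay; this works precisely because $u$ decays like $(t+1)^{-2}$ in $L^6$ (as shown inside Proposition \ref{prop_3}), so no term is merely critical. A minor subtlety is that the pointwise bound for $\widehat{\l^k w_i}$ requires the forcing's $L^1$-in-space norms to be integrable in time up to a $\sqrt{t+1}$ loss, which is exactly \eqref{Uniform_Bound_1}; no new estimate is needed.
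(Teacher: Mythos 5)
Your overall strategy coincides with the paper's — subtract the heat flow, run an $H^k$ energy estimate on the difference $w_i=c_i-\tilde c_i$, and exploit $w_i(0)=0$ in a Fourier-splitting argument — but there is a genuine quantitative gap in your pointwise Fourier bound, and as written the argument does not reach the rate $(t+1)^{-(k+2)}$. You estimate the time integrals of the forcing by Cauchy--Schwarz in time against the uniform bounds \eqref{Uniform_Bound_1}, which only gives
\[
\bigl|\wh{\l^k w_i}(\xi,t)\bigr|\lesssim |\xi|^{k+1}\sqrt{t+1}.
\]
Feeding this into the Fourier splitting, the low-frequency remainder is
\[
r^2(t)\int_{|\xi|\le r(t)}\bigl|\wh{\l^k w_i}\bigr|^2\,d\xi \lesssim (t+1)\,r^{2k+7}(t)\sim (t+1)^{-k-\fr{5}{2}},
\]
which, after multiplication by $(t+1)^m$ and integration in time, contributes $(t+1)^{m-k-\fr{3}{2}}$ and hence only yields $\Lnorm{\l^k w_i}{2}^2\lesssim (t+1)^{-k-\fr{3}{2}}$ — the same rate as $\Lnorm{\l^k c_i}{2}^2$ itself, which is useless for the lower bound in Proposition \ref{prop_6}. (Your intermediate computation ``$(t+1)r^{2k+7}\sim(t+1)^{-k-5/2-1}$'' miscounts the exponent, and the ``$(t+1)^{m-k-3}$-type terms'' you then integrate come only from the energy forcing $(t+1)^{-k-3}$, not from the Fourier-splitting remainder.) The vanishing of the initial data removes the $|\xi|^k$ term, as you correctly note, but that alone does not deliver the gain.

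The missing ingredient is a sharper time integration of the forcing. Instead of Cauchy--Schwarz in time, use the pointwise decay $\Lnorm{c_i}{2}\lesssim (t+1)^{-3/4}$ from \eqref{ci_bound} together with the uniform bound $\Lnorm{\na \phi}{2}\lesssim 1$ from \eqref{boundedness_of_grad_phi} to get $\TInt{\Lnorm{c_i\na\phi}{1}}\lesssim (t+1)^{1/4}$, and bound $\Lnorm{uc_i}{1}\le\Lnorm{\rho}{2}\Lnorm{\na\phi}{\infty}\Lnorm{c_i}{2}\lesssim (t+1)^{-9/4}$ via \eqref{poisson4}, which is integrable in time. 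This upgrades the pointwise bound to $|\wh{\l^k w_i}|\lesssim|\xi|^{k+1}(t+1)^{1/4}$, so the Fourier-splitting remainder becomes $(t+1)^{1/2}r^{2k+7}\sim(t+1)^{-k-3}$, matching the energy forcing and closing the argument at the claimed rate $(t+1)^{-(k+2)}$. The rest of your outline — the energy inequality, the treatment of $\Lnorm{\l^k(c_i\na\phi)}{2}^2$ and $\Lnorm{\l^k(uc_i)}{2}^2$ via Kato--Ponce and the established decay, and the choice $m>k+2$ — agrees with the paper.
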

\begin{proof}
With the aim of proving the sharpness of the decay of $\l^k c_i$ in $L^2$, we will first subtract the linear heat PDE given by $\partial_t \tilde{c}_i - D \D \tilde{c}_i = 0$ from the Nernst-Planck PDE \eqref{nernst_planck} to get
\begin{equation}
    \partial_t (c_i - \tilde{c}_i) - D \D(c_i - \tilde{c}_i) + u \cdot \na c_i - D z_i \Div{(c_i \na \phi)} = 0.
    \label{nernst_planck_and_heat}
\end{equation}
Next, we multiply the PDE \eqref{nernst_planck_and_heat} by $\l^{2k} (c_i - \tilde{c}_i)$ and integrate spatially to obtain
\be\begin{aligned}
    \fr{1}{2} \dt \Lnorm{\l^k (c_i - \tilde{c}_i)}{2}^2 + &D \Lnorm{\l^{k+1} (c_i - \tilde{c}_i)}{2}^2 
    \\
    &\leq D | z_i | \left| \RInt{\Div{(c_i \na \phi)} \ \l^{2k} (c_i - \tilde{c}_i)} \right| + \left| \RInt{u \cdot \na c_i \ \l^{2k} (c_i - \tilde{c}_i)}\right|.
    \label{sharpness_inequality_intermediate}
\end{aligned}\ee
For the first integral term of \eqref{sharpness_inequality_intermediate}, applying integration by parts and the H\"older and Young inequalities yields
\be\begin{aligned}
    D |z_i| \left| \RInt{\Div{(c_i \na \phi)} \ \l^{2k}(c_i - \tilde{c}_i)} \right| &= D |z_i| \left| \RInt{\l^{k-1} \Div{(c_i \na \phi)} \ \l^{k+1}(c_i - \tilde{c}_i)} \right| 
    \\
    &\leq \fr{D}{4} \Lnorm{\l^{k+1} (c_i - \tilde{c}_i)}{2}^2 + C \Lnorm{\l^k (c_i \na \phi)}{2}^2,
    \label{estimate_div_c_i_grad_phi_sharpness}
\end{aligned}\ee
where
\be\begin{aligned}
    \Lnorm{\l^k (c_i \na \phi)}{2}^2 &\lesssim \Lnorm{\l^k c_i}{6}^2 \Lnorm{\na \phi}{3}^2 + \Lnorm{c_i}{\infty}^2 \Lnorm{\l^k\na \phi}{2}^2 \\
    &\lesssim \Lnorm{\l^{k+1} c_i}{2}^2 \Lnorm{\na \phi}{2} \Lnorm{\na \na \phi}{2} + \left(\Lnorm{\na c_i}{2}^2 + \Lnorm{\na \na c_i}{2}^2 \right) \Lnorm{\l^{k-1} \rho}{2}^2 \\
    &\lesssim \fr{1}{(t+1)^{k + \fr{13}{4}}} + \fr{1}{(t+1)^{k+3}} \lesssim \fr{1}{(t+1)^{k+3}},
    \label{Lambda_c_i_grad_phi_estimate_2}
\end{aligned}\ee
having used the Kato-Ponce inequality, the continuous embedding of $W^{1, 6}$ in $L^\infty$, the Sobolev embedding theorem, the Gagliardo-Nirenberg interpolation inequality, the boundedness of $\Lnorm{\na \phi}{2}$ from \eqref{boundedness_of_grad_phi}, and the decay results for $c_i$. For the second integral term of \eqref{sharpness_inequality_intermediate}, using the divergence-free property of $u$, integration by parts, and the H\"older and Young inequalities, we get
\be\begin{aligned}
    \left| \RInt{u \cdot \na c_i \ \l^{2k} (c_i - \tilde{c}_i)} \right| &\leq \RInt{\left| \l^{k-1} \Div{(u c_i)} \ \l^{k+1} (c_i - \tilde{c}_i) \right|} \\
    &\leq \Lnorm{\l^k(u c_i)}{2} \Lnorm{\l^{k+1} (c_i - \tilde{c}_i)}{2} \\
    &\leq \fr{D}{4} \Lnorm{\l^{k+1} (c_i - \tilde{c}_i)}{2}^2 + C \Lnorm{\l^k (u c_i)}{2}^2,
    \label{estimate_u_grad_c_i_sharpness}
\end{aligned}\ee
where, similarly,
\be\begin{aligned}
    \Lnorm{\l^k (u c_i)}{2}^2 &\lesssim \Lnorm{\l^k u}{2}^2 \Lnorm{c_i}{\infty}^2 + \Lnorm{u}{6}^2 \Lnorm{\l^k c_i}{3}^2 \\
    &\lesssim \Lnorm{\l^k (\rho \na \phi)}{2}^2 \left(\Lnorm{\na c_i}{2}^2 + \Lnorm{\D c_i}{2}^2 \right) + \Lnorm{\rho \na \phi}{6}^2 \Lnorm{\l^k c_i}{2} \Lnorm{\l^{k+1} c_i}{2} \\
    &\lesssim \fr{1}{(t+1)^\fr{5}{2}} \sum_{j=1}^N \Lnorm{\l^k (c_j \na \phi)}{2}^2 + \fr{1}{(t+1)^{k+2}} \Lnorm{\rho}{6}^2 \Lnorm{\na \phi}{\infty}^2 \\
    &\lesssim \fr{1}{(t+1)^{k+\fr{11}{2}}} + \fr{1}{(t+1)^{k+2}} \Lnorm{\na \rho}{2}^2 \left(\Lnorm{\rho}{2}^2 + \Lnorm{\na \rho}{2}^2 \right) \\
    &\lesssim \fr{1}{(t+1)^{k+\fr{11}{2}}} + \fr{1}{(t+1)^{k+6}} \lesssim \fr{1}{(t+1)^{k+\fr{11}{2}}},
\end{aligned}\ee
by using \eqref{Lambda_c_i_grad_phi_estimate_2} and the estimate \eqref{poisson4}. Plugging back into \eqref{sharpness_inequality_intermediate}, we obtain
\be\begin{aligned}
    \dt \Lnorm{\l^k (c_i - \tilde{c}_i)}{2}^2 + D \Lnorm{\l^{k+1} (c_i - \tilde{c}_i)}{2}^2 \leq \fr{C}{(t+1)^{k + 3}}.
    \label{differential_inequality_for_Lambda_k_c_i_c_i_hat}
\end{aligned}\ee
To apply the Fourier splitting technique, we derive an improved pointwise bound for $\left| \wh{\l^{k+1} (c_i - \tilde{c}_i)} \right|$. Applying the $\l^k$ operator on \eqref{nernst_planck_and_heat} and taking the Fourier transform, we get the differential inequality
\be\begin{aligned}
    \partial_t \widehat{\l^k (c_i - \tilde{c}_i)} + CD|\xi|^2 \widehat{\l^k(c_i - \tilde{c}_i)} \lesssim |\xi|^{k+1} \left(\Lnorm{c_i \na \phi}{1} + \Lnorm{u c_i}{1} \right).   
\end{aligned}\ee
We then multiply by the integrating factor $e^{CD |\xi|^2 t}$ and integrate in time from $0$ to $t$ to get the pointwise bound
\be\begin{aligned}
    \left| \widehat{\l^k (c_i - \tilde{c}_i)} (\xi, t) \right| &\lesssim |\xi|^{k+1} \left( \TInt{\Lnorm{c_i \na \phi}{1}} + \TInt{\Lnorm{u c_i}{1}} \right) \\
    &\lesssim |\xi|^{k+1} \left(\TInt{\Lnorm{c_i}{2} \Lnorm{\na \phi}{2}} + \TInt{\Lnorm{\rho}{2} \Lnorm{\na \phi}{\infty} \Lnorm{c_i}{2}} \right) \\
    &\lesssim |\xi|^{k+1} \left( \TInt{\fr{1}{(t+1)^\fr{3}{4}}} + \TInt{\fr{\Lnorm{\rho}{2} + \Lnorm{\na \rho}{2}}{(t+1)^\fr{3}{2}}} \right) \\
    &\lesssim |\xi|^{k+1} \left( (t+1)^\fr{1}{4} + \TInt{\fr{1}{(t+1)^\fr{9}{4}}} \right) \\
    &\lesssim |\xi|^{k+1} \, (t+1)^\fr{1}{4},
    \label{pointwise_bound_Lambda_k_c_i_c_i_hat}
\end{aligned}\ee
wherein we used H\"older's inequality, the boundedness of $\Lnorm{\na \phi}{2}$ by \eqref{boundedness_of_grad_phi}, the decay bound of $\Lnorm{c_i}{2}$ and $\Lnorm{\na c_i}{2}$ by \eqref{ci_bound} and \eqref{grad_ci_decay}, and the elliptic estimate \eqref{poisson4}.
Furthermore, by Parseval's identity, the pointwise bound \eqref{pointwise_bound_Lambda_k_c_i_c_i_hat}, and Fubini's theorem for spherical coordinates, we have
\be\begin{aligned}
    \Lnorm{\l^{k+1} (c_i - \tilde{c}_i)}{2}^2 \gtrsim r^2(t) \Lnorm{\l^k (c_i - \tilde{c}_i)}{2}^2 -  (t+1)^\fr{1}{2} \  r^{2k + 7}(t),
    \label{fourier_splitting_for_Lambda_c_i_c_i_hat_intermediate_step}
\end{aligned}\ee
with $r$ to be chosen as $r(t) = \sqrt{\fr{m}{CD(t+1)}}$. In view of \eqref{fourier_splitting_for_Lambda_c_i_c_i_hat_intermediate_step}, the differential inequality \eqref{differential_inequality_for_Lambda_k_c_i_c_i_hat} becomes
\be\begin{aligned}
    \dt \Lnorm{\l^k (c_i - \tilde{c}_i)}{2}^2 + C D r^2(t) \Lnorm{\l^k (c_i - \tilde{c}_i)}{2}^2 \lesssim (t+1)^\fr{1}{2} \ r^{2k+7}(t) + \fr{1}{(t+1)^{k + 3}}.
\end{aligned}\ee
After multiplying by the integrating factor and integrating in time from $0$ to $t$, we get the desired bound \eqref{decay_bound_for_c_i-Tilde_c_i} with choice of $m > k+2$.
\end{proof}

By making use of the sharpness of the time decay of solutions to the linear homogeneous heat equation, we obtain the following proposition:

\begin{prop} \label{prop_6} Suppose $c_i(0) \ge 0$, $c_i(0) \in H^k(\R^3) \cap L^1(\R^3)$ for all $i \in \left\{1, \dots, N\right\},$ and $\int_{\R^3} \rho_0 = 0$.
There exists a time $t_0$, depending on the parameters of the problem and on the $L^1$ and $H^k$ norms of the initial concentrations, such that for all $t \geq t_0$, we have
\begin{equation}
    \Lnorm{\l^k c_i}{2}^2 \gtrsim \fr{1}{(t+1)^{k+\fr{3}{2}}}
\end{equation} for all $i \in \left\{1, \dots, N\right\}$.
\end{prop}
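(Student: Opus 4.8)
The plan is to compare $c_i$ with the solution $\tilde c_i$ of the linear heat equation carrying the same initial datum, exactly as in the setup of Proposition~\ref{prop_5}. Splitting $\l^k c_i = \l^k \tilde c_i + \l^k (c_i - \tilde c_i)$ and using the reverse triangle inequality in $L^2$,
\[
\Lnorm{\l^k c_i}{2} \ge \Lnorm{\l^k \tilde c_i}{2} - \Lnorm{\l^k (c_i - \tilde c_i)}{2},
\]
everything reduces to two ingredients: (i) a sharp \emph{lower} bound $\Lnorm{\l^k \tilde c_i}{2}^2 \gtrsim (t+1)^{-k-\fr{3}{2}}$ for the heat part; and (ii) the observation that Proposition~\ref{prop_5} already provides $\Lnorm{\l^k (c_i - \tilde c_i)}{2}^2 \le O_k (t+1)^{-k-2}$, which decays strictly faster and can therefore be absorbed once $t$ is large.

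For ingredient (i) I would argue on the Fourier side. Since $\tilde c_i$ solves $\pa_t \tilde c_i = D\D \tilde c_i$ with datum $c_i(0)$, one has $\wh{\tilde c_i}(\xi,t) = \wh{c_i(0)}(\xi)\,e^{-4\pi^2 D|\xi|^2 t}$, hence by Plancherel's theorem
\[
\Lnorm{\l^k \tilde c_i}{2}^2 = (2\pi)^{2k}\FInt{|\xi|^{2k}\,|\wh{c_i(0)}(\xi)|^2\,e^{-8\pi^2 D|\xi|^2 t}}.
\]
The decisive point is that $c_i(0)\in L^1(\R^3)$ is nonnegative and (tacitly) not identically zero, so $\wh{c_i(0)}$ is continuous with $\wh{c_i(0)}(0)=\int_{\R^3} c_i(0)\,dx = \Lnorm{c_i(0)}{1} > 0$; hence there is $\delta>0$ with $|\wh{c_i(0)}(\xi)| \ge \tfrac{1}{2}\Lnorm{c_i(0)}{1}$ for $|\xi|\le\delta$. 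Restricting the integral to $\{|\xi|\le\delta\}$ and rescaling $\xi = \eta/\sqrt{t}$ then gives
\[
\Lnorm{\l^k \tilde c_i}{2}^2 \ge (2\pi)^{2k}\,\frac{\Lnorm{c_i(0)}{1}^2}{4}\, t^{-k-\fr{3}{2}}\int_{|\eta|\le\delta\sqrt{t}} |\eta|^{2k}e^{-8\pi^2 D|\eta|^2}\,d\eta,
\]
and the last integral exceeds the fixed positive constant $\int_{|\eta|\le 1}|\eta|^{2k}e^{-8\pi^2 D|\eta|^2}\,d\eta$ as soon as $\delta\sqrt{t}\ge 1$. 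Since $t^{-k-\fr{3}{2}}\ge (t+1)^{-k-\fr{3}{2}}$, this yields $\Lnorm{\l^k \tilde c_i}{2}^2 \ge c_1 (t+1)^{-k-\fr{3}{2}}$ for every $t\ge\delta^{-2}$, with $c_1>0$ depending only on $\Lnorm{c_i(0)}{1}$, $D$, and $k$.

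Combining the two ingredients,
\[
\Lnorm{\l^k c_i}{2} \ge \sqrt{c_1}\,(t+1)^{-\fr{k}{2}-\fr{3}{4}} - \sqrt{O_k}\,(t+1)^{-\fr{k}{2}-1} = (t+1)^{-\fr{k}{2}-\fr{3}{4}}\left(\sqrt{c_1} - \sqrt{O_k}\,(t+1)^{-\fr{1}{4}}\right),
\]
and I would pick $t_0$ large enough (depending on $c_1$, $O_k$, hence on the data and the parameters) that $\sqrt{O_k}\,(t_0+1)^{-1/4}\le\tfrac{1}{2}\sqrt{c_1}$ and $t_0\ge\delta^{-2}$; then $\Lnorm{\l^k c_i}{2} \ge \tfrac{1}{2}\sqrt{c_1}\,(t+1)^{-\fr{k}{2}-\fr{3}{4}}$ for all $t\ge t_0$, which is the claimed estimate after squaring. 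The only genuine obstacle here is ingredient (i), and it rests entirely on the non-vanishing of $\wh{c_i(0)}$ at the origin — precisely where the nonnegativity of $c_i(0)$, forcing $\wh{c_i(0)}(0)=\Lnorm{c_i(0)}{1}$, and the implicit nontriviality of the datum are used; the rescaling and the absorption of the faster-decaying remainder are then routine.
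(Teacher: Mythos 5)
Your proof is correct and follows essentially the same route as the paper: the reverse triangle inequality combined with the faster decay of $\l^k(c_i-\tilde c_i)$ from Proposition~\ref{prop_5}, with the remainder absorbed for $t$ large. The only difference is that you explicitly prove the sharp lower bound $\Lnorm{\l^k \tilde c_i}{2}^2 \gtrsim (t+1)^{-k-\fr{3}{2}}$ for the heat flow via the Fourier-side rescaling argument (correctly identifying that it hinges on $\wh{c_i(0)}(0)=\Lnorm{c_i(0)}{1}>0$, i.e.\ on the tacit nontriviality of the datum), whereas the paper invokes this classical fact without proof.
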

\begin{proof} For each $i \in \left\{1, \dots, N\right\}$, we denote by $\tilde{c}_i$ the solution to the heat equation $\pa_t \tilde{c}_i - D \Delta \tilde{c}_i = 0$ on $\R^3$ (with decay at infinity) and with the same initial data $\tilde{c}_i(0) = c_i(0)$. 
Using the reverse triangle inequality and the decaying bound \eqref{decay_bound_for_c_i-Tilde_c_i}, we have
\be\begin{aligned}
    \Lnorm{\l^k \tilde{c}_i}{2} - \Lnorm{\l^k c_i}{2} \lesssim \Lnorm{\l^k (c_i - \tilde{c}_i)}{2} \lesssim \fr{1}{(t+1)^{\fr{k}{2} + 1}},
\end{aligned}\ee
yielding, upon rearrangement and invoking the bound \eqref{decay_of_lambda_k_c_i},
\be\begin{aligned}
    \Lnorm{\l^k c_i}{2} &\gtrsim -\fr{1}{(t+1)^{\fr{k}{2} + 1}} + \Lnorm{\l^k \tilde{c}_i}{2} \gtrsim - \fr{1}{(t+1)^{\fr{k}{2} + 1}} + \fr{1}{(t+1)^{\fr{k}{2} + \fr{3}{4}}} \\
    &\geq \fr{C_1 (t+1)^{\fr{1}{4}} - C_2}{(t+1)^{\fr{k}{2} + 1}}.
\end{aligned}\ee
We now want to find a time $t_0$ and a constant $C_3$ such that, for all $t \geq t_0$, we have $C_1 (t+1)^{\fr{1}{4}} - C_2 \geq C_3 (t+1)^\fr{1}{4}$. Choosing $C_3 = \fr{C_1}{2}$ and rearranging, we obtain that the choice of $t_0$ must be $t_0 \geq \left(\fr{2C_2}{C_1} \right)^4 - 1$. And thus, we reach the inequality,
\be\begin{aligned}
    \Lnorm{\l^k c_i}{2} \geq \fr{C_1}{2 (t+1)^{\fr{k}{2} + \fr{3}{4}}},
\end{aligned}\ee
for $t \geq t_0$, as desired.
\end{proof}

\noindent In view of Propositions \ref{prop_3}, \ref{prop_4}, \ref{prop_5}, and \ref{prop_6}, we obtain the following theorem: 

\beg{Thm} Suppose $c_i(0) \ge 0$, $c_i(0) \in H^k(\R^3) \cap L^1(\R^3)$ for all $i \in \left\{1, \dots, N\right\},$ and $\int_{\R^3} \rho_0= 0.$
There exist a time $t_0$ and positive constants $N_1, N_2$ depending on the parameters of the problem and on the $L^1(\R^3)$ and $H^k(\R^3)$ norms of the initial concentrations such that 
\begin{equation}
     \fr{N_1}{(t+1)^{k+\fr{3}{2}}} \le \Lnorm{\l^k c_i}{2}^2 \le \fr{N_2}{(t+1)^{k+\fr{3}{2}}}.
\end{equation} for all $t \ge t_0$ and all $i \in \left\{1, \dots, N\right\}$.
\end{Thm}

\section{Entropy Analysis} \la{s5}

In this section, we investigate the behavior of the relative entropy associated with the NPD model on $\R^3$.

We denote the individual and total entropy quantities by $\mathcal{E}_i (t) = \RInt{c_i(x, t) \log c_i (x, t)}$ and $\mathcal{E}(t) = \sum_{i=1}^N \mathcal{E}_i (t)$.

The following proposition provides lower bounds for $-\mathcal{E}$ that grows logarithmically in time: 

\beg{prop} \label{p} Suppose $c_i(0) \ge 0$, $c_i(0) \in H^1(\R^3) \cap L^1(\R^3)$ and $\int_{\R^3} c_i(0) \log c_i(0) \in \R$ for any $i \in \left\{1, \dots, N\right\},$ and $\int_{\R^3} \rho_0 = 0$. Then, there exist positive constants $\Gamma_1$ and $\Gamma_2$ depending on the parameters of the problem, $\mathcal{E}_i(0), \, \|c_i(0)\|_{L^1}$ and $\|c_i(0)\|_{L^2}$, and a positive constant $\Gamma_3$ depending only on the diffusivity $D$ and $\|c_i(0)\|_{L^1}$ such that
\be 
-\mathcal{E}_i \ge \Gamma_1 + \frac{3}{2} \|c_i(0)\|_{L^1} \log (\Gamma_2 + \Gamma_3t)
\ee for any $t \ge 0$ and all $i \in \left\{1, \dots, N\right\}$. As a consequence, the entropy $\mathcal{E}$ obeys 
    \begin{equation}
        \lim_{t \to \infty} \mathcal{E}(t) = - \infty.
    \end{equation} 
\end{prop}
\begin{proof}
We multiply the Nernst-Planck PDE \eqref{nernst_planck} by $\log c_i$, integrate spatially over $\R^3$, integrate by parts, and use the Poisson equation \eqref{poisson} to obtain
\be\begin{aligned}
    \dt \mathcal{E}_i (t) &= - D \RInt{\dfrac{\na c_i \cdot \na c_i}{c_i}} - z_i D \RInt{\na \phi \cdot \na c_i} \\
    &= - 4D \Lnorm{\na \sqrt{c_i}}{2}^2 - z_i D \RInt{\rho c_i}.
    \label{derivative_of_entropy}
\end{aligned}\ee
Since $\displaystyle \frac{c_i \  dx}{\int_{\R^3}c_i}$ is a probability measure and $\log x$ is a concave function, we can apply Jensen's inequality to bound the entropies from above as follows
\be\begin{aligned}
    \mathcal{E}_i (t) &= \RInt{c_i \log c_i} = \left(\RInt{c_i}\right) \RInt{\dfrac{c_i}{\RInt{c_i}} \log c_i} \\
    &\leq \Lnorm{c_i}{1} \log \left( \dfrac{\RInt{c_i^2}}{\RInt{c_i}}\right) = \Lnorm{c_i}{1} \log \left( \dfrac{\Lnorm{\sqrt{c_i}}{4}^4 }{\Lnorm{c_i}{1}}\right) .
\end{aligned}\ee
In view of the Gagliardo-Nirenberg interpolation inequality, we have
\be\begin{aligned}
    \mathcal{E}_i (t) \leq \Lnorm{c_i}{1} \log \left( \dfrac{C \Lnorm{\sqrt{c_i}}{2} \Lnorm{\na \sqrt{c_i}}{2}^3}{\Lnorm{c_i}{1}} \right) 
    = \fr{3}{2} \Lnorm{c_i}{1} \log \left(\dfrac{C \Lnorm{\na \sqrt{c_i}}{2}^2}{\Lnorm{c_i}{1}^\fr{1}{3}} \right),
\end{aligned}\ee
Thus, upon exponentiation, we get
\be\begin{aligned}
   \exp{\left({-\dfrac{2 \mathcal{E}_i}{3 \Lnorm{c_i}{1}}}\right)} \ \dfrac{\Lnorm{\na \sqrt{c_i}}{2}^2}{\Lnorm{c_i}{1}^\fr{1}{3}} \geq \dfrac{1}{C}.
    \label{intermediate_entropy_bound}
\end{aligned}\ee
We then differentiate $N_i(t) = \exp\left({\fr{-2 \mathcal{E}_i}{3 \Lnorm{c_i}{1}}}\right)$ in time and use the equality \eqref{derivative_of_entropy}, the bound \eqref{intermediate_entropy_bound}, and the decaying estimate \eqref{ci_bound} to obtain
\be\begin{aligned}
    \dt N_i(t) &= - \fr{2}{3 \Lnorm{c_i}{1}} N_i (t) \ \dt \mathcal{E}_i = \dfrac{2}{3 \Lnorm{c_i}{1}} N_i (t) \left( 4D \Lnorm{\na \sqrt{c_i}}{2}^2 + z_i D \RInt{\rho c_i}\right) \\
    &\geq \dfrac{8D}{3C \Lnorm{c_i}{1}^\fr{2}{3}} + \dfrac{2 z_i D}{3 \Lnorm{c_i}{1}} N_i (t) \RInt{\rho c_i} \geq \dfrac{8D}{3C \Lnorm{c_i}{1}^\fr{2}{3}} - \dfrac{2 |z_i| D}{3 \Lnorm{c_i}{1}} N_i (t) \Lnorm{\rho}{2} \Lnorm{c_i}{2}\\
    &\geq \dfrac{8D}{3C \Lnorm{c_i}{1}^\fr{2}{3}} - \dfrac{2 |z_i| D \ \G}{3 \Lnorm{c_i}{1} (t+1)^\fr{3}{2}} N_i (t).
    \label{entropy_differential_inequality}
\end{aligned}\ee
For notational convenience, we denote $c = \fr{2 |z_i| D \ \G}{3 \Lnorm{c_i}{1}}$ and solve the differential inequality \eqref{entropy_differential_inequality} by multiplying by the integrating factor $e^{c\TInt{(s+1)^{-1.5}}} = e^{c\left(2- 2 (t+1)^{-0.5}\right)}$, which gives
\be\begin{aligned}
    N_i (t) \geq e^{-2c} \left( N_i(0)  + \fr{8D}{3 C \Lnorm{c_i}{1}^\fr{2}{3}} t\right).
\end{aligned}\ee
Applying the logarithmic function on both sides of the latter inequality yields the desired result.
\end{proof}

The following proposition provides  upper bounds for $-\mathcal{E}$ that grows logarithmically in time: 

\beg{prop} \label{pp}  Suppose $c_i(0) \ge 0$, $c_i(0) \in H^2(\R^3) \cap L^1(\R^3)$ and $\int_{\R^3} |x|^6c_i(0)^2 \in \R$ for any $i \in \left\{1, \dots, N\right\},$ and $\int_{\R^3} \rho_0 = 0$.  Then, there exist a time $T_{\G}$ depending on the $L^1$ and $H^2$ norms of the initial concentrations and positive constants $\Gamma_4$, $\Gamma_5$, and $\Gamma_6$ depending on  $\|c_i(0)\|_{L^1}$, $\|c_i(0)\|_{H^2}$, and $\||x|^3 c_i(0)\|_{L^2}$  such that
\be\begin{aligned} \label{oo}
    - \mathcal{E}(t) \leq \Gamma_4 \log (\G_5 + \G_6 \ t^\fr{15}{16})
\end{aligned}\ee for any $t \ge t_{\G}$.
\end{prop}

\begin{proof} The proof is divided into two major steps.

{\bf{Step 1. Moment Bounds.}}
Multiplying the Nernst-Planck PDE \eqref{nernst_planck} by $|x|^6 c_i$, we have
\be\begin{aligned}
    \fr{1}{2} \dt \Lnorm{|x|^3 c_i}{2}^2 &+ D\Lnorm{|x|^3 \na c_i}{2}^2 \\
    &= - D \RInt{\na c_i \cdot \na \left(|x|^6\right) c_i} - \RInt{u \cdot \na c_i \ |x|^6 c_i} - z_i D \RInt{\Div{(c_i \na \phi) \ |x|^6 c_i}} \\
    &= \text{I} + \text{II} + \text{III}.
    \label{moment_bound_inequality}
\end{aligned}\ee
We now treat each term individually. For term I, we use H\"older's inequality with exponents $\fr{3}{2}, \ 3$ and the decaying bound \eqref{ci_bound} to obtain
\be\begin{aligned}
    \text{I} &= - D \RInt{c_i \na c_i \cdot \left(6x |x|^4\right)} \leq \fr{D}{4} \Lnorm{|x|^3 \na c_i}{2}^2 + C \Lnorm{|x|^2 c_i}{2}^2 \\
    &\leq \fr{D}{4} \Lnorm{|x|^3 \na c_i}{2}^2 + C \Lnorm{|x|^3 c_i}{2}^\fr{4}{3} \Lnorm{c_i}{2}^\fr{2}{3} \leq \fr{D}{4} \Lnorm{|x|^3 \na c_i}{2}^2 + \fr{C}{(t+1)^\fr{1}{2}} \Lnorm{|x|^3 c_i}{2}^\fr{4}{3}.
    \label{aaa}
\end{aligned}\ee
As for term II, we use the divergence-free property of $u$ and integrate by parts to get
\be\begin{aligned}
    \text{II} = \RInt{u c_i \cdot \na c_i \ |x|^6} + 6 \RInt{u c_i^2 \cdot x |x|^4} = - \text{II} + 6 \RInt{u c_i^2 \cdot x |x|^4}.
\end{aligned}\ee
By making use of continuous Sobolev embeddings, the potential estimate \eqref{poisson4}, and the decaying bounds \eqref{ci_bound}, \eqref{grad_ci_decay}, and \eqref{decay_of_lambda_k_c_i}, we estimate the velocity in $L^{\infty}$ as follows 
\be\begin{aligned}
\|u\|_{L^{\infty}}
&\lesssim \|u\|_{W^{1,6}}
\lesssim \|u\|_{L^6} + \|\na u\|_{L^6} \\&
\lesssim \|\rho\|_{L^6}\|\na \phi\|_{L^{\infty}} + \|\na \rho\|_{L^6}\|\na \phi\|_{L^{\infty}} + \|\rho\|_{L^6} \|\Delta \phi\|_{L^{\infty}} \\
&\lesssim (\|\na \rho\|_{L^2} + \|\Delta \rho\|_{L^2}) \left(\|\rho\|_{L^2} + \|\na \rho\|_{L^2} \right) + \|\na \rho\|_{L^2} (\|\na \rho\|_{L^2} + \|\Delta \rho\|_{L^2})
\lesssim \frac{1}{(t+1)^{2}},
\label{estimate_u_infty}
\end{aligned}\ee 
which yields
\be\begin{aligned}
    \text{II} &= 3 \RInt{u c_i^2 \cdot x |x|^4} \lesssim \Lnorm{|x|^3 c_i}{2} \Lnorm{|x|^2 c_i}{2} \Lnorm{u}{\infty} \lesssim \Lnorm{|x|^3 c_i}{2}^\fr{5}{3} \Lnorm{c_i}{2}^\fr{1}{3} \Lnorm{u}{\infty} 
    \\
    &\lesssim \fr{1}{(t+1)^\fr{9}{4}} \Lnorm{|x|^3 c_i}{2}^\fr{5}{3},
\end{aligned}\ee
after using a similar interpolation of $\Lnorm{|x|^2 c_i}{2}$ to the one used in \eqref{aaa}.
Invoking again the potential bound \eqref{poisson4}, we similarly estimate term III and obtain   
\be\begin{aligned}
    \text{III} &= z_i D \RInt{c_i \na \phi \cdot \left(\na c_i |x|^6 + 6x |x|^4 c_i \right)} \\
    &\lesssim \Lnorm{|x|^3 \na c_i}{2} \Lnorm{|x|^3 c_i}{2} \Lnorm{\na \phi}{\infty} + \Lnorm{|x|^3 c_i}{2} \Lnorm{|x|^2 c_i}{2} \Lnorm{\na \phi}{\infty} \\
    &\leq \fr{D}{4} \Lnorm{|x|^3 \na c_i}{2}^2 + C \Lnorm{|x|^3 c_i}{2}^2 \Lnorm{\na \phi}{\infty}^2 + C \Lnorm{|x|^3 c_i}{2}^\fr{5}{3} \Lnorm{c_i}{2}^\fr{1}{3} \Lnorm{\na \phi}{\infty} \\
    &\leq \fr{D}{4} \Lnorm{|x|^3 \na c_i}{2}^2 + \fr{C}{(t+1)^\fr{3}{2}} \Lnorm{|x|^3 c_i}{2}^2 + \fr{C}{(t+1)} \Lnorm{|x|^3 c_i}{2}^\fr{5}{3}.
\end{aligned}\ee
Consequently, combining these individual estimates and using Young's inequality give rise to
\be\begin{aligned}
    \dt \Lnorm{|x|^3 c_i}{2}^2 &\lesssim \fr{1}{(t+1)^\fr{1}{2}} \Lnorm{|x|^3 c_i}{2}^\fr{4}{3} + \fr{1}{(t+1)} \Lnorm{|x|^3 c_i}{2}^\fr{5}{3} + \fr{1}{(t+1)^\fr{3}{2}} \Lnorm{|x|^3 c_i}{2}^2 \\
    &\leq \fr{C_0}{(t+1)^\fr{3}{2}} \Lnorm{|x|^3 c_i}{2}^2 + C (t+1)^\fr{3}{2},
\end{aligned}\ee
which, upon applying Gronwall's inequality, yields
\be\begin{aligned} \label{ooo}
    \Lnorm{|x|^3 c_i}{2}^2 \leq C_1 + C_2 (t+1)^\fr{5}{2},
\end{aligned}\ee
where $C_1$ and $C_2$ depend on $\Lnorm{|x|^3 c_i(0)}{2}$, $\Hnorm{c_i(0)}{2}$, $\Lnorm{c_i(0)}{1}$, and the parameters of the problem.

{\bf{Step 2. Absolute Entropy Upper Bound.}}
By the continuous embedding of $W^{1,6}$ in $L^\infty$ and the Sobolev embedding theorem, we know that
\be\begin{aligned}
    \Lnorm{c_i}{\infty} \leq \Wnorm{c_i}{1}{6} \lesssim \Lnorm{\na c_i}{2} + \Lnorm{\D c_i}{2} \lesssim \dfrac{1}{(t+1)^\fr{5}{4}} \leq \fr{1}{2},
\end{aligned}\ee
for $t \geq T_{\G}$, where $T_\Gamma$ is a constant that depends on the $L^1$ and $H^2$ norms of the initial ionic concentrations. Hence, it holds that $|c_i(x, t)|  \leq \fr{1}{2}$ for $t \geq T_\G$ and almost every $x \in \R^3$. Due to the concavity of the logarithmic function and the fact that $\frac{c_i \, dx}{\int_{\R^3} c_i}$ is a probability measure, we can make use of Jensen's inequality to bound the entropy by
\be\begin{aligned}
    \left|\mathcal{E}(t)\right| &\leq \sum_{i=1}^N \RInt{|c_i \log c_i| } = \sum_{i=1}^N \RInt{c_i \log \fr{1}{c_i}} = 4 \sum_{i=1}^N \left(\RInt{c_i} \right) \RInt{\fr{c_i}{\RInt{c_i}} \log \left( \fr{1}{c_i} \right)^\fr{1}{4}} \\
    &\lesssim \sum_{i=1}^N \Lnorm{c_i}{1} \log \left( \RInt{\fr{c_i^\fr{3}{4}}{\Lnorm{c_i}{1}}} \right),
    \label{absolute_entropy_inequality}
\end{aligned}\ee for any $t \ge t_{\G}$. 
Using H\"older's inequality with exponent $\fr{8}{3}, \, \fr{8}{5},$ and the moment bound \eqref{ooo}, we have
\be\begin{aligned}
    \RInt{c_i^\fr{3}{4}} &= \RInt{\fr{(1+|x|)^\fr{9}{4} \, c_i^\fr{3}{4}}{(1+ |x|)^\fr{9}{4}}} \leq \left( \RInt{(1+|x|)^6 \ c_i^2} \right)^\fr{3}{8} \left( \RInt{\fr{1}{(1+|x|)^\fr{18}{5}}}\right)^\fr{5}{8} \\
    &\lesssim \Lnorm{c_i}{2}^\fr{3}{4} + \Lnorm{|x|^3 c_i}{2}^\fr{3}{4} \lesssim 1 + (t+1)^\fr{15}{16}.
\end{aligned}\ee
Thus, substituting back into \eqref{absolute_entropy_inequality}, we obtain the desired estimate \eqref{oo}.
\end{proof}

When the spatial domain is bounded, the entropy decays in time to 0:

\beg{prop} \label{ppp} Let $A > 0$. Suppose $c_i(0) \ge 0$, $c_i(0) \in H^2(\R^3) \cap L^1(\R^3)$ and $\int_{\R^3} \rho_0 = 0$ for all $i \in \left\{1, \dots, N\right\}$. Then, it holds that 
    \begin{equation}
        \lim_{t \to \infty} \left| \int_{\{|x| \leq A\}} c_i \log c_i \ dx \right| = 0
    \end{equation} for all $i \in \left\{1, \dots, N\right\}$.
\end{prop}

\beg{proof}
To prove the convergence of entropy on finite spatial domains, we decompose the entropy integral over the finite sphere of radius $A$ into the following two integrals
\be\begin{aligned}
    \left| \int_{\{|x| \leq A\}} c_i \log c_i \ dx \right| &\leq \int_{\{|x| \leq A\}} \left| c_i \log c_i \right| \ dx \\
    &= \int_{\{|x| \leq A\} \cap \{c_i \leq 1\}} \left| c_i \log c_i \right| \ dx + \int_{\{|x| \leq A\} \cap \{c_i > 1\}} \left| c_i \log c_i \right| \ dx \\
    &\leq \int_{\{|x| \leq A\} \cap \{c_i \leq 1\}} c_i \log \fr{1}{c_i} \ dx + \RInt{c_i^2} \lesssim \int_{\{|x| \leq A\}} c_i^\fr{1}{2} \ dx + \Lnorm{c_i}{2}^2 \\
    &\lesssim \left( \int_{\{|x| \leq A\}} c_i^2 \ dx \right)^{\frac{1}{4}} \left(\int_{\{|x| < A\}} 1 \ dx \right)^\fr{3}{4} + \Lnorm{c_i}{2}^2 \\
    &\lesssim \left(\fr{4}{3}\pi A^3\right)^\fr{3}{4} \Lnorm{c_i}{2}^\fr{1}{2} +  \Lnorm{c_i}{2}^2 \lesssim \fr{1}{(t+1)^\fr{3}{8}} + \fr{1}{(t+1)^\fr{3}{2}} \lesssim \fr{1}{(t+1)^\fr{3}{8}}.
\end{aligned}\ee
Letting $t \to \infty$, we obtain the desired result.
\end{proof}
\noindent In view of Propositions \ref{p}, \ref{pp}, and \ref{ppp}, we obtain the following theorem: 

\begin{Thm}
Suppose $c_i(0) \ge 0$, $c_i(0) \in H^2(\R^3) \cap L^1(\R^3)$, $\int_{\R^3} c_i(0) \log c_i(0) < \infty$, $\int_{\R^3} |x|^6c_i(0)^2 \in \R$ for any $i \in \left\{1, \dots, N\right\},$ and $\int_{\R^3} \rho_0 = 0$.  Then, there exist a time $T_{\G}$ depending on the $L^1$ and $H^2$ norms of the initial concentrations and positive constants $\gamma_1, \gamma_2 >0$ depending on the initial data and the parameters of the problem such that 
\be\begin{aligned}
   -\gamma_1 (1 + \log (1+t)) \le  \mathcal{E}(t) \le -\gamma_2 (1 + \log (1+t)) 
\end{aligned}\ee for any $t \ge t_{\G}$. Moreover, it holds that 
\begin{equation}
        \lim_{t \to \infty} \left| \int_{\{|x| \leq A\}} c_i \log c_i \ dx \right| = 0
    \end{equation} for any $A > 0$ and any $i \in \left\{1, \dots, N\right\}$.
\end{Thm}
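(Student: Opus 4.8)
The plan is to obtain the two-sided logarithmic bound on $\mathcal{E}(t)$ directly by combining Propositions \ref{p} and \ref{pp}, and to quote Proposition \ref{ppp} verbatim for the local decay assertion; there is nothing new to prove, only a normalization of the various logarithmic rates to the single reference profile $1+\log(1+t)$ and a choice of one common time threshold. First I would sum the per-species lower bound $-\mathcal{E}_i \ge \Gamma_1 + \tfrac{3}{2}\|c_i(0)\|_{L^1}\log(\Gamma_2+\Gamma_3 t)$ of Proposition \ref{p} over $i \in \{1,\dots,N\}$ to get $-\mathcal{E}(t) \ge N\Gamma_1 + \tfrac{3}{2}\big(\sum_i \|c_i(0)\|_{L^1}\big)\log(\Gamma_2+\Gamma_3 t)$. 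Then I would absorb the constants inside the logarithm: for $t$ large enough one has $\log(\Gamma_2+\Gamma_3 t) \ge \tfrac{1}{2}\log(1+t)$, and the leftover additive constant is dominated by a multiple of $1$, so there is $\gamma_2>0$ (depending on the initial data and the parameters) with $-\mathcal{E}(t) \ge \gamma_2\big(1+\log(1+t)\big)$, that is, $\mathcal{E}(t)\le -\gamma_2\big(1+\log(1+t)\big)$.

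For the reverse inequality I would start from $-\mathcal{E}(t) \le \Gamma_4\log(\Gamma_5+\Gamma_6 t^{15/16})$ of Proposition \ref{pp}, valid for $t\ge T_\Gamma$. Using $\Gamma_5+\Gamma_6 t^{15/16} \le (\Gamma_5+\Gamma_6)(1+t)^{15/16}$ and taking logarithms yields $-\mathcal{E}(t) \le \Gamma_4\log(\Gamma_5+\Gamma_6) + \tfrac{15}{16}\Gamma_4\log(1+t)$, and for $t$ past a (possibly enlarged) threshold the right-hand side is at most $\gamma_1\big(1+\log(1+t)\big)$ for a suitable $\gamma_1>0$. Hence $\mathcal{E}(t) \ge -\gamma_1\big(1+\log(1+t)\big)$, and taking $t_\Gamma$ to be $T_\Gamma$ further enlarged so that both elementary comparisons above hold gives the stated two-sided bound for all $t\ge t_\Gamma$.

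Finally, the assertion $\lim_{t\to\infty} \big| \int_{\{|x|\le A\}} c_i \log c_i \, dx \big| = 0$ for every $A>0$ and every $i \in \{1,\dots,N\}$ is exactly the content of Proposition \ref{ppp}, so it is simply invoked.

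There is no genuine analytic obstacle in this last theorem — it is a packaging of the three preceding propositions. The only point requiring a little care is bookkeeping: reconciling the lower logarithmic rate $\log(\Gamma_2+\Gamma_3 t)$ and the upper logarithmic rate $\log(\Gamma_5+\Gamma_6 t^{15/16})$ into the common profile $1+\log(1+t)$ with explicit constants, and selecting a single time threshold $t_\Gamma$ that makes all the elementary inequalities valid at once; the genuine work has already been done in Propositions \ref{p}, \ref{pp}, and \ref{ppp}.
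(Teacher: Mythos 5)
Your proposal is correct and matches the paper's intent exactly: the theorem is stated "in view of Propositions \ref{p}, \ref{pp}, and \ref{ppp}" with no further argument, and your normalization of the two logarithmic rates to the common profile $1+\log(1+t)$ (summing the per-species lower bound of Proposition \ref{p}, absorbing $\Gamma_5+\Gamma_6 t^{15/16}\le(\Gamma_5+\Gamma_6)(1+t)^{15/16}$ in the upper bound, and enlarging the time threshold) is precisely the routine bookkeeping the paper leaves implicit.
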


\vspace{0.5cm}

{\bf{Data Availability Statement.}} The research does not have any associated data.

\vspace{0.5cm}

{\bf{Conflict of Interest.}} The authors declare that they have no conflict of interest.

\bibliographystyle{plain}
\bibliography{bibliography}

\end{document}